\newcommand{\F}{\mathbb{F}}
\newcommand{\cB}{\mathcal{B}}
\newcommand{\cA}{\mathcal{A}}
\newcommand{\cM}{\mathcal{M}}
\newcommand{\Fq}{\F_{q}}
\newcommand{\Fqn}{\F_{q^n}}
\newcommand{\Fqt}{\F_{q^t}}
\newcommand{\la}{\langle}
\newcommand{\ra}{\rangle}
\newcommand{\homm}{\End_{\Fq}(\Fqn)}
\DeclareMathOperator{\PG}{{PG}}
\DeclareMathOperator{\Aut}{{Aut}}
\DeclareMathOperator{\GL}{{GL}}
\DeclareMathOperator{\PGL}{{PGL}}
\DeclareMathOperator{\GaL}{\Gamma L}
\DeclareMathOperator{\Gal}{Gal}
\DeclareMathOperator{\rk}{rk}
\DeclareMathOperator{\N}{N}
\DeclareMathOperator{\End}{End}
\DeclareMathOperator{\cC}{\mathcal{C}}
\DeclareMathOperator{\cS}{\mathcal{S}}
\DeclareMathOperator{\diag}{diag}
\newtheorem{theorem}{Theorem}[section]
\newtheorem{lemma}[theorem]{Lemma}
\newtheorem{corollary}[theorem]{Corollary}
\newtheorem{proposition}[theorem]{Proposition}
\theoremstyle{definition}%% Hans
\newtheorem{definition}[theorem]{Definition}
\newtheorem{example}[theorem]{Example}
\newtheorem{remark}[theorem]{Remark}
\title{A standard form for scattered linearized polynomials and properties 
of the related translation planes}
\author{Giovanni Longobardi \quad Corrado Zanella}
\date{}
\begin{document}

\maketitle

\begin{abstract}
In this paper we present results concerning the stabilizer $G_f$ in $\GL(2,q^n)$ 
of the subspace $U_f=\{(x,f(x))\colon x\in\Fqn\}$, $f(x)$ a scattered linearized polynomial 
in $\Fqn[x]$. 
Each $G_f$ contains the $q-1$ maps $(x,y)\mapsto(ax,ay)$, $a\in\Fq^*$. 
By virtue of the results of Beard~\cite{beard1,beard2} and Willett~\cite{willett}, the matrices 
in $G_f$ are simultaneously diagonalizable. 
This has several consequences: $(i)$~the polynomials such that $|G_f|>q-1$ have a standard form 
of type $\sum_{j=0}^{n/t-1}a_jx^{q^{s+jt}}$ for some $s$ and $t$ such that $(s,t)=1$, $t>1$ a divisor 
of $n$; $(ii)$~this standard form is essentially unique; $(iii)$~for $n>2$ and $q>3$, the translation plane $\cA_f$ 
associated with $f(x)$ admits nontrivial affine homologies if and only if $|G_f|>q-1$, and in that case 
those with axis through the origin form two groups of cardinality $(q^t-1)/(q-1)$ that 
exchange axes and coaxes; $(iv)$~no plane of type $\cA_f$, $f(x)$ a scattered polynomial not of
pseudoregulus type, is a generalized André plane. 
\end{abstract}

\medskip\noindent
\emph{AMS subject classifications:} 15B33, 51A40, 51E22, 51E14

\medskip\noindent
\emph{Keywords:} Andr\'e plane; linear set; linearized polynomial; 
MRD code; partial spread; projective line; rank distance code; translation plane

\section{Introduction}

A \textit{scattered polynomial} is an  $\Fq$-linearized polynomial $f(x)=\sum_{i=0}^{n-1}a_ix^{q^i}\in\Fqn[x]$ such that 
for any $y,z\in\Fqn$ the condition $zf(y)-yf(z)=0$ implies that $y$ and $z$ are $\Fq$-linearly 
dependent.
Scattered polynomials have been investigated for about 20 years because of their interest in various 
combinatorial contexts, such as blocking sets, planar spreads and the related translation planes \cite{tp1,LP2001}, 
and rank distance codes (\textit{RD codes}), see \cite{PolverinoZulloConnections} for a survey. 
The RD codes are codes in which words are matrices and the distance between two matrices $A$ and $B$ is 
$\rk(A-B)$. 
For an RD code $\cC$ in $\Fq^{m\times n}$, an inequality analogous to Singleton's one holds: 
\begin{equation}\label{eq:singleton}
\log_q|\cC|\le \max\{m,n\}(\min\{m,n\}-d+1)
\end{equation}
where $d$ is the minimum distance of $\cC$.
As it follows immediately from the definition of a scattered polynomial $f(x)$, 
the set $\cC_f=\la x,f(x)\ra_{\Fqn}$ (seen as a subset of $\homm$) is a set of $\Fq$-linear endomorphisms of $\Fqn$ having rank at least $n-1$ and therefore the equality holds in \eqref{eq:singleton}. 
In other words, $\cC_f$ is a \textit{maximum rank distance code}, or MRD code, see \cite{Sh2}.
One aspect of scattered polynomials that makes them interesting objects of research is that they are 
rare.  
In fact, there are only three families of scattered polynomials defined for infinite values of $n$: 
those of pseudoregulus type~\cite{BL2000,LMPT}, 
the Lunardon-Polverino polynomials~\cite{LP2001, Lunardon2015GeneralizedTG, LuTrZh17,Sh}, and the polynomials 
described in \cite{LoMaTrZh21, LoZa21,NeSaZu21}. See \cite{BaMo, BaZh} for asymptotic results in this direction.
In Section \ref{s:diag} of this paper we derive some results on the stabilizer in $\GL(2,q^n)$ 
of the $\Fq$-subspace $U_f=\{(x,f(x))\colon x\in\Fqn\}$ associated with a scattered polynomial $f(x)$, 
which are essential for the main results contained in the following sections. 
In Section \ref{s:sf}, we consider the class $\cS_{n,q}$ of the scattered polynomials such that
the stabilizer of $U_f$ has size greater than $q-1$.
Any polynomial in $\cS_{n,q}$ admits a \textit{standard form}
$h(x)=\sum_{j=0}^{n/t-1}c_jx^{q^{s+jt}}$, $t>1$, i.e.
for any $f(x)\in\cS_{n,q}$ the subspace $U_f$ is in the same orbit of $U_h$ under 
the action  of $\GL(2,q^n)$. Furthermore, such standard form is essentially unique.
In Section \ref{s:tp}, we consider the translation planes associated with scattered polynomials, as 
defined in \cite{tp1}. 
We find that if $f(x)$ is not in $\cS_{n,q}$, then the plane $\cA_f$  has no affine central collineations.
If on the contrary $f(x)\in\cS_{n,q}$, then there are only two centers of affine homologies of $\cA_f$.
The related two groups of homologies in $\GL(2,q^n)$ are such that those in one group have axes and coaxes that are respectively coaxes and axes of the homologies in the other group.

\section{Preliminaries and notation}
In this section, some  fundamental notions about linear sets of the finite projective line are recalled. The reader may refer to the surveys \cite{PolverinoZulloConnections,Polverinosurvey}.
Throughout this paper $q$ denotes a power of a prime $p$. Moreover, in the following for a set $S$ of field elements (or vectors), we denote by $S^*$ the set of non-zero
elements (non-zero vectors) of $S$.
Let $r,n\in\mathbb N$, $r>0$, $n>1$.
Let $U$ be an $r$-dimensional $\Fq$-subspace of the $(2n)$-dimensional vector space
$V(\Fqn^2,\Fq)$.
The following subset of $\PG(\Fqn^2,\Fqn)=\PG(1,q^n)$
\[
L_U=\{\la v\ra_{\Fqn}\colon v\in U^*\}
\]
is called \emph{$\Fq$-linear set} (or just \emph{linear set}) of \emph{rank $r$}.
The linear set $L_U$ is \emph{scattered} if it has the maximum possible size related to the
given $q$ and $r$, that is, $|L_U|=(q^r-1)/(q-1)$.
Equivalently, $L_U$ is scattered if and only if
\begin{equation}\label{e:scattered}
  \dim_{\Fq}\left(U\cap\la v\ra_{\Fqn}\right)\le1\quad\mbox{ for any }v\in\Fqn^2.
\end{equation}
Any $\Fq$-subspace $U$ satisfying \eqref{e:scattered} is a \emph{scattered $\Fq$-subspace}.

Clearly, any $\Fq$-linear set in $\PG(1,q^n)$ of rank greater than $n$ coincides
with $\PG(1,q^n)$.
So, the linear sets of rank $n$ are called linear sets of \emph{maximum rank}.
They are associated with linearized polynomials.
An \emph{$\Fq$-linearized polynomial}, or \emph{$q$-polynomial}, in $\Fqn[x]$ is of type 
$f(x)=\sum_{i=0}^ka_ix^{q^i}$ ($k\in\mathbb N$).
If $a_k\neq0$, then $k$ is the \emph{$q$-degree} of $f(x)$.
It is well-known that the $\Fq$-linearized polynomials of $q$-degree less than $n$ in $\Fqn[x]$
are in one-to-one correspondence with the endomorphisms of the vector space
$V(\Fqn,\Fq)$, see \cite[Chapter 3] {finitefields} and \cite[Subsection 3.1] {PolverinoZulloConnections}.\\ 
Let $f(x)\in\Fqn[x]$ be an $\Fq$-linearized polynomial and define 
\[U_f=\{(x,f(x))\colon x\in\Fqn\},\]
and $L_f=L_{U_f}$.
Such $L_f$ is an $\Fq$-linear set of maximum rank of $\PG(1,q^n)$, and is scattered if and only if
$f(x)$ is.
Two $\Fq$-linearized polynomials $f(x)$ and $g(x)$ in $\Fqn[x]$ are said to be \emph{$\GL$-equivalent}
(or \emph{$\GaL$-equivalent}) when a $\phi$  in $\GL(2,q^n)$ (resp.\ in $\GaL(2,q^n)$) exists such that
$U_f^\phi=U_g$.

Since $\PGL(2,q^n)$ acts $3$-transitively on $\PG(1,q^n)$,
any linear set of maximum rank is projectively equivalent to an $L_U$ such that
$\la(0,1)\ra_{\Fqn}\notin L_U$.
Therefore, a linearized polynomial $f(x)$ exists such that $U=U_f$.

By abuse of notation, $L_f$ will also denote the set $\{f(x)/x\colon x\in\Fqn^*\}$ of the
nonhomogeneous projective coordinates of the points belonging to the set $L_f$.

We will now introduce some more elements that will be investigated in this paper. For more details on the relationship between scattered polynomials and translation planes see \cite{tp1}.
We adopt the notation $G_{\{T\}}$ for the setwise stabilizer of $T\subseteq S$ under the action
of a group $G$ acting on $S$.
Let $G_f=\GL(2,q^n)_{\{U_f\}}$, and
$G_f^\circ=G_f\cup\{O\}$, where $O$ is the zero $2\times2$ matrix.

%\begin{enumerate}[1.]
%\item The \textit{Desarguesian
%spread} of the $(2n)$-dimensional $\F_q$-vector space $\F^2_{q^n}$ is the set
%\[\mathcal{D}= \left \{ \langle v\rangle_{\Fqn} : v \in (\F^2_{q^n})^* \right \} \]
%\item The collection $\mathcal{B}_f$ of the following $n$-dimensional $\Fq$-subspaces of $\Fqn^2$
%\[
%  \langle(1,m)\rangle_{\Fqn}\     (m\in\Fqn\setminus L_f), \qquad   hU_f   \  (h\in\Fqn^*),
% \qquad\{0\}\times\Fqn,
%\]
%that in case $f(x)$ is scattered is a spread of $n$-dimensional $\Fq$-subspaces of $\Fqn^2$, 
%defining a translation plane $\cA_f$;
%\item $G_f=\GL(2,q^n)_{\{U_f\}}$;
%\item $G_f^\circ=G_f\cup\{O\}$, where $O$ is the zero $2\times2$ matrix;
%\item $H_f=\{d\varphi \colon d \in \Fqn^*, \varphi \in G_f\}=\Fqn^*G_f.$
%\end{enumerate}
%For a scattered polynomial $f(x)$, $H_f$ is the group of the linear automorphisms of the 
%translation plane $\cA_f$ see \cite[Corollary 4.3]{tp1}.

\section{\texorpdfstring{The stabilizer of a scattered subspace of $\Fqn^2$}{The stabilizer of a scattered subspace}}\label{s:diag}

\subsection{Algebraic properties and representation as right idealizer}

Regarding main definitions on MRD codes, the reader can refer to \cite{LoMaTrZh21,PolverinoZulloConnections, Sh2}. Below we will recall just a few definitions, useful for understanding the next sections. Let $\cC \subseteq \F_{q}^{m \times n}$ be a rank distance code, its \textit{left idealizer} and \textit{right idealizer} are defined as
\[I_L(\cC) =\{X \in \F_{q}^{m \times m} : X C \in \cC \text{ for all } C\in \cC \}, \]
and
\[I_R(\cC) =\{Y \in \F_{q}^{n \times n}: C Y \in \cC \text{ for all }C \in \cC \}, \]
respectively. When $\cC \subseteq \F_{q}^{m \times n}$, $m \leq n$, is a \textit{linear} MRD code with minimum distance $d$, i.e. an optimal code which is a vector space over $\F_q$, it is well known that  $I_L(\cC)$ is a field with $ q \leq |I_L(\cC) |\leq q^m$; moreover, if  $\max\{d, m - d + 2\} \geq  \left \lfloor  \frac{n}{2} \right \rfloor+ 1$, its  right idealizer $I_R(\cC)$ is a field as well, with $q \leq |I_R(\cC)| \leq q^n$, see \cite [Theorem 5.4]{LuTrZh17} and  \cite[Result 3.4]{PolverinoZulloConnections}. Actually, if these are fields, they are isomorphic to subfields of $\F_{q^m}$ and $\F_{q^n}$, respectively.
As a matter of fact, for any subfield $\mathcal M$ of $\F_q^{r\times r}$ containing the identity matrix, $\F_q^r$ is a right vector space over $\mathcal M$ with exterior product $  \mathbf x M$,  for $\mathbf x\in\F_q^r$ and $M\in\mathcal M$.
Then the following holds:
\begin{theorem} \label{idealizersubfields} Let $\cC \subseteq \F_q^{m \times n}$ be a linear rank distance code. If $I_L(\cC)$  (resp.\ $I_R(\cC)$) is a field, then it is isomorphic to a subfield of $\F_{q^m}$ (resp.\ $\F_{q^n})$. \end{theorem}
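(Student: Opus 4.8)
The plan is to exploit the observation recorded just before the theorem: for any field $\mathcal M$ of $m\times m$ matrices over $\F_q$ containing $I_m$, the space $\F_q^m$ of row vectors becomes a right $\mathcal M$-vector space under the exterior product $\mathbf x M$. Setting $\mathcal M=I_L(\cC)$, I would first observe that since $\cC$ is $\F_q$-linear, for every $a\in\F_q$ the scalar matrix $aI_m$ satisfies $(aI_m)C=aC\in\cC$ for all $C\in\cC$; hence $\F_q I_m\subseteq\mathcal M$. Being a finite field that contains $\F_q$ (embedded as scalars), $\mathcal M$ is isomorphic to $\F_{q^k}$ for some integer $k\ge1$, and in particular $|\mathcal M|=q^k$.

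Next I would invoke the right $\mathcal M$-module structure on $\F_q^m$. The assignment $(\mathbf x,M)\mapsto\mathbf x M$ is $\F_q$-bilinear, satisfies $\mathbf x I_m=\mathbf x$ and $\mathbf x(MN)=(\mathbf x M)N$ by associativity of matrix multiplication; because $\mathcal M$ is a field, this makes $\F_q^m$ a free right vector space over $\mathcal M$. Denoting its $\mathcal M$-dimension by $d$, a count of elements gives $q^m=|\F_q^m|=|\mathcal M|^d=q^{kd}$, so $m=kd$ and therefore $k\mid m$. Since $\F_{q^k}$ is a subfield of $\F_{q^m}$ exactly when $k\mid m$, we conclude that $\mathcal M\cong\F_{q^k}$ is isomorphic to a subfield of $\F_{q^m}$.

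The statement for the right idealizer is entirely analogous: $I_R(\cC)$ is a field of $n\times n$ matrices, it contains $\F_q I_n$ by the same linearity argument, and the same row-vector action $\mathbf x\mapsto\mathbf x Y$ turns $\F_q^n$ into a right $I_R(\cC)$-vector space, whence $|I_R(\cC)|=q^{k'}$ with $k'\mid n$ and $I_R(\cC)\cong\F_{q^{k'}}$ embeds in $\F_{q^n}$.

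The argument is short, so no single computation is hard; the points requiring care are (a) verifying that linearity forces the scalar matrices into the idealizer, which is what guarantees $|\mathcal M|$ is a power of $q$ rather than merely a power of $p$, and (b) checking that the bilinear product genuinely endows $\F_q^m$ with the structure of a vector space over $\mathcal M$ (identity and associativity), since it is precisely the freeness of this module that converts the cardinality equation into the divisibility $k\mid m$. The conceptual subtlety I would expect a reader to flag is not to conflate ``subfield of $\F_q^{m\times m}$'' with ``scalar matrices'': the elements of $\mathcal M$ need not be scalar, and the content of the theorem is exactly that, up to abstract isomorphism, such a matrix field is nonetheless constrained to have order $q^k$ with $k\mid m$.
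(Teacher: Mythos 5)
Your proof is correct and follows exactly the route the paper indicates: the remark immediately preceding the theorem (that $\F_q^m$ is a right vector space over any matrix subfield of $\F_q^{m\times m}$ containing $I_m$) is the paper's entire justification, and your cardinality count $q^m=|\mathcal M|^d$ giving $k\mid m$ is the intended completion of that argument. Nothing further is needed.
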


Given a scattered polynomial $f(x)$,  the set of $q$-polynomials 
\[\mathcal{C}_f=\{ ax+bf(x): a,b\in \F_{q^n} \}= \langle x, f(x) \rangle_{\Fqn} \]
defines a linear MRD code of minimum distance $n-1$ over $\F_q$.
Recall that, given two scattered polynomials $f(x)$ and $g(x)$ over $\F_{q^n}$, the corresponding MRD 
codes $\cC_f$ and $\cC_g$ are \textit{equivalent} if there exist invertible
$L_1$, $L_2\in\homm$ and $\rho\in \Aut(\F_{q})$ 
such that
	\[ L_1\circ \varphi^\rho \circ L_2 \in \cC_g \text{ for all }\varphi\in \cC_f,\]
	where $\circ$ stands for the composition of maps and $\varphi^\rho(x)= \sum a_i^\rho x^{q^i}$ for $\varphi(x)=\sum a_i x^{q^i}$. We denote by $(L_1, L_2, \rho)$ the
equivalence defined above.
In \cite{Sh}, the equivalence between two codes $\cC_f$ and $\cC_g$ is related to the $\GaL$-equivalence of the subspaces $U_f$ and $U_g$, precisely
\begin{theorem} \label{scatteredvscodes}
\cite[Theorem 8]{Sh}  Let $f(x)$ and $g(x)$ be scattered linearized polynomials. Then
$\cC_f$ and $\cC_g$ are equivalent if and only if  $f(x)$ and $g(x)$ are $\GaL$-equivalent. 
\end{theorem}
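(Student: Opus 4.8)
The plan is to set up a dictionary translating the action of $\GaL(2,q^n)$ on the subspaces $U_f$ into the composition action that defines code equivalence, the bridge being the left idealizer of $\cC_f$. The starting observation is that $\cC_f=\la x,f(x)\ra_{\Fqn}$ is a $2$-dimensional $\Fqn$-subspace of $\homm$ containing the identity map $x$, and that the field of scalar multiplications $\{\lambda x\colon\lambda\in\Fqn\}\cong\Fqn$ lies in its left idealizer, since $(\lambda x)\circ(ax+bf(x))=\lambda a\,x+\lambda b\,f(x)\in\cC_f$. As $\cC_f$ is a linear MRD code, $I_L(\cC_f)$ is a field, and by Theorem~\ref{idealizersubfields} it embeds in $\Fqn$; hence $I_L(\cC_f)=\Fqn$ exactly. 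This maximality of the left idealizer is the structural fact that drives the whole argument.

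For the implication ``$\GaL$-equivalent $\Rightarrow$ equivalent codes'' I would factor a collineation $\phi\in\GaL(2,q^n)$ with $U_f^\phi=U_g$ as $\phi=M\cdot\diag(\sigma,\sigma)$, with $M\in\GL(2,q^n)$ and $\sigma\in\Aut(\Fqn)$ acting coordinatewise. Since $\sigma$ commutes with every $q$-power map, $U_f^{\diag(\sigma,\sigma)}=U_{f^\sigma}$, where $f^\sigma$ has the coefficients of $f$ twisted by $\sigma$; and writing $M=\begin{pmatrix}a&b\\c&d\end{pmatrix}$, the requirement that $U_{f^\sigma}^M$ be a graph forces $g=(b\,x+d\,f^\sigma)\circ(a\,x+c\,f^\sigma)^{-1}$, that is $\cC_g=\cC_{f^\sigma}\circ(a\,x+c\,f^\sigma)^{-1}$, a code equivalence with $L_1=x$ and $L_2=(a\,x+c\,f^\sigma)^{-1}$. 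The only delicate point is that code equivalence admits $\rho\in\Aut(\Fq)$ whereas geometry produces $\sigma\in\Aut(\Fqn)$: writing $q=p^h$ and $\sigma\colon x\mapsto x^{p^j}$, I split $\sigma$ into the residue $\rho\colon x\mapsto x^{p^{j\bmod h}}\in\Aut(\Fq)$ and the $\Fq$-\emph{linear} factor $x\mapsto x^{q^{\lfloor j/h\rfloor}}$, which, being an invertible $q$-polynomial, is absorbed into $L_1,L_2$. Thus $\GaL$-equivalence yields a genuine code equivalence $(L_1,L_2,\rho)$.

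For the converse I would absorb $\rho$ first: setting $h=f^\rho$ (again scattered, with $U_h$ in the $\GaL(2,q^n)$-orbit of $U_f$), the hypothesis becomes $L_1\circ\cC_h\circ L_2=\cC_g$ for invertible $L_1,L_2\in\homm$. Conjugating the idealizer condition gives $I_L(\cC_g)=L_1\,I_L(\cC_h)\,L_1^{-1}$, and since both idealizers equal $\Fqn$ by the first paragraph, $L_1$ normalizes the field of scalar multiplications; the standard normalizer computation then forces $L_1\colon x\mapsto c_1x^{q^{i_1}}$, i.e. $L_1\in\GaL(1,q^n)$. To handle $L_2$ I exploit that $x\in\cC_g$: choosing an invertible $\varphi_0\in\cC_h$ with $L_1\circ\varphi_0\circ L_2=x$ gives $L_2=\varphi_0^{-1}\circ L_1^{-1}$, whence $L_1^{-1}\circ\cC_g\circ L_1=\cC_h\circ\varphi_0^{-1}$. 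The left-hand side equals $\cC_{\tilde g}$ for a Frobenius twist $\tilde g$ of $g$ with $U_{\tilde g}$ in the $\GaL(2,q^n)$-orbit of $U_g$ (the twist realized by $\diag(L_1,L_1)$); the right-hand side is $\cC_h$ post-composed with the invertible member $\varphi_0=a\,x+c\,h$ of $\cC_h$, exactly the shape produced by a matrix of $\GL(2,q^n)$ acting on $U_h$. Reading off a second generator $b\,x+d\,h$ so that $\varphi_0$ and it span $\cC_h$, the associated $M\in\GL(2,q^n)$ satisfies $U_h^{M}=U_{\tilde g}$. Chaining $U_f\sim_{\GaL}U_h\sim_{\GL}U_{\tilde g}\sim_{\GaL}U_g$ yields the desired $\GaL$-equivalence.

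The main obstacle is exactly the step controlling $L_1$ and $L_2$ in the converse: a priori these are arbitrary invertible $\Fq$-endomorphisms, with no connection to the $2\times2$ structure over $\Fqn$, so nothing obvious guarantees they arise from a collineation of $\PG(1,q^n)$. The leverage is that $\cC_f$ is an MRD code whose left idealizer is as large as possible, namely all of $\Fqn$; its invariance up to conjugacy under equivalence pins $L_1$ inside $\GaL(1,q^n)$, and the presence of the identity in every $\cC_f$ then pins $L_2$ up to the harmless $\GL(2,q^n)$-reparametrization. A secondary technical nuisance, already flagged above, is reconciling the automorphism group $\Aut(\Fqn)$ appearing geometrically with the smaller $\Aut(\Fq)$ allowed in code equivalence; this is resolved by observing that the powers of the $q$-Frobenius are $\Fq$-linear and hence belong to the $L_i$ factors rather than to $\rho$.
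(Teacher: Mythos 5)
The paper does not prove this statement: it is quoted verbatim from Sheekey \cite[Theorem 8]{Sh}, so there is no internal proof to compare against. Your argument is correct and reconstructs the standard proof of that result: the decisive fact is that $I_L(\cC_f)$ equals the field of $\Fqn$-scalar maps exactly (it contains them, is a field by the MRD property, and has size at most $q^n$ by Theorem~\ref{idealizersubfields}), so in the converse direction $L_1$ must normalize $\{\lambda x\colon\lambda\in\Fqn\}$ and hence has the form $x\mapsto c_1x^{q^{i_1}}$, after which the presence of the identity in $\cC_g$ determines $L_2=\varphi_0^{-1}\circ L_1^{-1}$ and the residual freedom is precisely a matrix of $\GL(2,q^n)$ carrying $U_h$ to $U_{\tilde g}$. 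The steps you leave implicit — that the normalizer of the scalar field in the invertible $\Fq$-endomorphisms is $\GaL(1,q^n)$, and that conjugation by $x\mapsto c_1x^{q^{i_1}}$ sends $\cC_g$ to $\cC_{\tilde g}$ with $U_{\tilde g}$ in the $\GaL(2,q^n)$-orbit of $U_g$ — are routine and correctly invoked, and your resolution of the $\Aut(\Fqn)$ versus $\Aut(\Fq)$ mismatch by absorbing the $\Fq$-linear Frobenius powers $x\mapsto x^{q^m}$ into $L_1,L_2$ is exactly the right bookkeeping. I see no gap.
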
 

If a rank distance code $\cC$ is given as a subset of $\homm$, then the notion of left and right idealizer can be rephrased in this setting as follow
\[I_L(\cC) =\{\varphi\in \homm: \varphi\circ f\in \cC \text{ for all }f\in \cC \} \]
and
\[I_R(\cC) =\{\varphi\in \homm: f\circ \varphi \in \cC \text{ for all }f\in \cC \}, \]
respectively. By \cite[Corollary 5.6]{LuTrZh17} and Theorem \ref{idealizersubfields}, both are isomorphic to subfields of $\F_{q^n}$. Note that any MRD code $\cC_f$ associated with a scattered polynomial $f(x)$ has $I_L(\cC_f)$ isomorphic to the field $\F_{q^n}$.

\begin{proposition} \cite[Lemma 4.1]{LoMaTrZh21}\label{le:full_auto_MRD}
Let $\mathcal{L}_{n,q}$ be the $\Fq$-vector space of all $q$-polynomials with $q$-degree less than $n$. Let $f(x) \in \mathcal{L}_{n,q}$ and  denote $\cC_f$ the associated MRD code.  Then $\Aut(\cC_f)$ 
consists of elements of the type
\[g \mapsto \alpha x^{q^m} \circ g^\sigma \circ L\]
with invertible $L$, $\alpha\in \F_{q^n}^*$, $m \in \{0,1,\ldots, n-1\}$  and $\sigma \in \Aut(\Fq)$ such that $\cC_{f^{\sigma q^m}} \circ x^{q^m}\circ L= \cC_f$.\\
For any $\alpha \in \Fqn^*$, $m \in \{0,1,\ldots,n-1\}$ and $\sigma \in \Aut(\Fq)$, there is a bijection between the set of all $L$ such that  $(\alpha x^{q^m}, L, \sigma) \in \Aut(\mathcal{C}_f)$ and all linear isomorphism between $U_f$ to $U_{f^{\sigma q^m}}$. Furthermore, 
$I_R(\cC_f)^*$ and $G_f$  are isomorphic groups.
\end{proposition}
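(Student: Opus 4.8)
The plan is to read an element of $\Aut(\cC_f)$ as a triple $(L_1,L_2,\sigma)$ with $L_1,L_2\in\homm$ invertible and $\sigma\in\Aut(\Fq)$ satisfying $L_1\circ g^\sigma\circ L_2\in\cC_f$ for all $g\in\cC_f$, and to pin down the shape of $L_1$ using the left idealizer. Recall from the discussion above that for scattered $f$ the left idealizer is the full field of left scalar multiplications, $I_L(\cC_f)=\{x\mapsto\alpha x:\alpha\in\Fqn\}\cong\Fqn$. First I would show that any $\Phi\colon g\mapsto L_1\circ g^\sigma\circ L_2$ with $\Phi(\cC_f)=\cC_f$ conjugates $I_L(\cC_f)$ onto itself: for $\lambda\in I_L(\cC_f)$ put $\mu=L_1\circ\lambda^\sigma\circ L_1^{-1}$, and for an arbitrary $h=\Phi(g)\in\cC_f$ one computes $\mu\circ h=L_1\circ(\lambda\circ g)^\sigma\circ L_2=\Phi(\lambda\circ g)\in\cC_f$, so $\mu\in I_L(\cC_f)$. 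Thus $L_1$ normalizes the field of scalar multiplications, which forces $L_1$ to be $\Fqn$-semilinear.

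Writing out the semilinearity condition gives $L_1(\gamma y)=\gamma^{q^m}L_1(y)$ for all $\gamma,y\in\Fqn$ (with $m$ the Frobenius exponent produced by the conjugation); evaluating at $y=1$ yields $L_1(x)=\alpha x^{q^m}$ with $\alpha=L_1(1)\in\Fqn^*$. This establishes the claimed form of the left factor and reduces $\Phi(\cC_f)=\cC_f$ to the stated condition $\cC_{f^{\sigma q^m}}\circ x^{q^m}\circ L=\cC_f$ once $\sigma$ and $x^{q^m}$ are absorbed into the base polynomial (the left scalar $\alpha$ is harmless, since $\alpha\cC_f=\cC_f$). For the bijection statement I would then fix $(\alpha,m,\sigma)$, so that $g\mapsto\alpha x^{q^m}\circ g^\sigma$ is one fixed bijection carrying $\cC_f$ to the code attached to $f^{\sigma q^m}$; what remains free is exactly the right factor $L$, and I would match each admissible $L$ with the $\Fq$-linear isomorphism $U_f\to U_{f^{\sigma q^m}}$ that $L$ induces on the graph subspaces, via the dictionary between right composition on $\cC_f$ and maps of $U_f$ made explicit below.

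For the isomorphism $I_R(\cC_f)^*\cong G_f$ I would construct the map directly. If $\varphi\in I_R(\cC_f)^*$, then $x\circ\varphi=\varphi\in\cC_f$ and $f\circ\varphi\in\cC_f$, say $\varphi=ax+bf$ and $f\circ\varphi=cx+df$; assign $\varphi\mapsto M_\varphi=\left(\begin{smallmatrix}a&b\\c&d\end{smallmatrix}\right)$. Since $\varphi$ is a bijection, as $x$ runs over $\Fqn$ so does $\varphi(x)$, and $(x,f(x))\mapsto(\varphi(x),f(\varphi(x)))=(ax+bf(x),\,cx+df(x))$ shows that $M_\varphi$ maps $U_f$ onto $U_f$, so $M_\varphi\in G_f$. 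Conversely, a matrix $\left(\begin{smallmatrix}a&b\\c&d\end{smallmatrix}\right)\in G_f$ forces $cx+df(x)=f(ax+bf(x))$ for all $x$, so $\varphi:=ax+bf$ satisfies $\varphi\in\cC_f$ and $f\circ\varphi\in\cC_f$, whence $g\circ\varphi\in\cC_f$ for every $g\in\cC_f$ by $\Fqn$-linearity and $\varphi$ is invertible because $M$ is; thus $\varphi\in I_R(\cC_f)^*$. Reading off the action on $U_f$ gives $M_{\varphi_1\circ\varphi_2}=M_{\varphi_1}M_{\varphi_2}$, so $\varphi\mapsto M_\varphi$ is a group isomorphism.

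I expect the main obstacle to be the reduction to the monomial form of $L_1$: it rests on the recorded fact that for scattered $f$ the left idealizer is all of $\Fqn$, and on the clean verification that a code automorphism normalizes this field of scalars, so that $L_1$ is $\Fqn$-semilinear and hence a monomial $\alpha x^{q^m}$. The remaining difficulty is purely clerical, namely tracking how the twist $\sigma$ and the factor $x^{q^m}$ reindex coefficients when $\Phi(\cC_f)=\cC_f$ is rewritten as the stated condition and when the bijection of the second part is set up; none of this is conceptually deep, but the exponent shifts must be handled consistently.
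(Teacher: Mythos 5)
The paper does not actually prove this proposition: it is imported verbatim from \cite[Lemma 4.1]{LoMaTrZh21}, so there is no in-paper argument to compare yours against. Your reconstruction follows the standard route and is essentially sound: reducing $L_1$ to $\alpha x^{q^m}$ by showing that a code automorphism normalizes the left idealizer $I_L(\cC_f)\cong\Fqn$ (the field of left scalar multiplications) is exactly the right mechanism, and your explicit dictionary $\varphi\mapsto M_\varphi$ between $I_R(\cC_f)^*$ and $G_f$ is the intended one. Two details to tighten if you write it out in full: in the middle claim the linear isomorphism $U_f\to U_{f^{\sigma q^m}}$ attached to an admissible $L$ must be read off from $N=x^{q^m}\circ L$ (write $N=ax+bf$ and $f^{\sigma q^m}\circ N=cx+df$; the correspondence $L\leftrightarrow N$ is bijective), not from $L$ itself; and with the paper's row-vector convention $(x,y)\mapsto(x,y)A$ your $M_\varphi$ is the transpose of the stabilizing matrix, so $\varphi\mapsto M_\varphi$ is a priori an anti-homomorphism --- harmless, since composing with inversion (or noting $I_R(\cC_f)^*$ is cyclic) still gives isomorphic groups.
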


\begin{remark}\label{isofields}
    It is straightforward to verify that the group isomorphism constructed 
 in \cite[Lemma 4.1]{LoMaTrZh21} can be extended to a field isomorphism between $I_R(\cC_f)$ and $G^\circ_f$. In particular, $|G^\circ_f|=q^t$, where $t$ divides $n$.
\end{remark}

\begin{theorem} \cite[Theorem 2.2]{CsMaPoZh20}
	\label{th:classification}
	 Let $\cC$ be an $\F_q$-subspace of $\mathcal{L}_{n,q}$. Assume that one of the left and right idealizers of $\cC$ is isomorphic to $\F_{q^n}$. Then there exists an integer $k$ such that $|\cC|=q^{kn}$ and $\cC$ is equivalent to 
	\begin{equation}\label{eq:normalized_C_1}
		 \left\{ \sum_{i=0}^{k-1}a_{i} x^{q^{t_i}}+ \sum_{j\notin\{t_0,t_1,\cdots, t_{k-1}\}}  g_j(a_0,\cdots,a_{k-1})x^{q^j}\colon a_0,\cdots,a_{k-1}\in \F_{q^n} \right\}
	\end{equation}
	where $0\le t_0<t_1<\cdots<t_{k-1}\le n-1$ and the $g_j$'s are $\F_q$-linear functions from $\F_{q^n}^k$ to $\F_{q^n}$. If the other idealizer of $\cC$ is also isomorphic to $\F_{q^n}$, then $\cC$ is equivalent to
	\begin{equation*}\label{eq:normalized_C_2}
		\left\{ \sum_{i=0}^{k-1}a_{i} x^{q^{t_i}}\colon a_i\in \F_{q^n} \right\} = \langle x^{q^{t_i}} \, \colon i=0,1,\ldots,k-1 \rangle_{\F_{q^n}}.
	\end{equation*}
\end{theorem}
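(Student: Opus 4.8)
The plan is to work inside the identification of $\mathcal{L}_{n,q}$ with $\homm\cong\F_q^{n\times n}$ and to \emph{normalise}, by an equivalence, whichever idealizer is assumed to be a copy of $\Fqn$. I first record how the two idealizers transform under the equivalence $\cC'=\{L_1\circ\varphi^\rho\circ L_2\colon\varphi\in\cC\}$: a direct check of the defining conditions gives $I_L(\cC')=L_1\circ I_L(\cC)^\rho\circ L_1^{-1}$ and $I_R(\cC')=L_2^{-1}\circ I_R(\cC)^\rho\circ L_2$, so the left and right conjugations act independently. The second ingredient is the conjugacy lemma: every subfield of $\F_q^{n\times n}$ isomorphic to $\Fqn$ is $\GL(n,q)$-conjugate to the field of scalar maps $M=\{\,x\mapsto\alpha x\colon\alpha\in\Fqn\,\}$. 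This is Skolem--Noether over the central simple $\F_q$-algebra $\F_q^{n\times n}$, or, concretely, the remark that such a subfield turns $\Fqn$ into a one-dimensional $\Fqn$-vector space, which is unique up to an $\F_q$-linear change of basis.

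Consider first the case $I_L(\cC)\cong\Fqn$. Applying a left conjugation I normalise $I_L(\cC)=M$; set $\rho=\mathrm{id}$ throughout. Closure of $\cC$ under $m_\alpha\circ-$ says that $\sum_i b_ix^{q^i}\in\cC$ forces $\sum_i\alpha b_ix^{q^i}\in\cC$, i.e.\ $\cC$ is an $\Fqn$-subspace of $\mathcal{L}_{n,q}$ for the natural $\Fqn$-structure on the coefficients. Hence $\dim_{\Fqn}\cC=k$ for some $k$ and $|\cC|=q^{kn}$. Reducing a basis to row echelon form with respect to the ordered basis $x,x^q,\ldots,x^{q^{n-1}}$, with pivots in positions $0\le t_0<\cdots<t_{k-1}\le n-1$, produces exactly \eqref{eq:normalized_C_1}, the off-pivot coordinates being $\Fqn$-linear (a fortiori $\F_q$-linear) functions $g_j$ of the pivot coordinates $a_0,\ldots,a_{k-1}$. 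The case $I_R(\cC)\cong\Fqn$ is analogous, normalising $I_R(\cC)=M$ by a right conjugation: closure is now under $\varphi\mapsto\varphi\circ m_\beta$, which scales the $i$-th coefficient by $\beta^{q^i}$. This still makes $\cC$ a $q^{kn}$-element space and echelon reduction again yields \eqref{eq:normalized_C_1}, but because the action is twisted by Frobenius the $g_j$ are only guaranteed $\F_q$-linear --- which is precisely the generality asserted.

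For the second assertion, assume both idealizers are isomorphic to $\Fqn$. Since the conjugations are independent, I normalise simultaneously $I_L(\cC)=I_R(\cC)=M$. Then for every $\varphi=\sum_i b_ix^{q^i}\in\cC$ and all $\beta\in\Fqn$ one has $\sum_i b_i\beta^{q^i}x^{q^i}\in\cC$, and, by $\Fqn$-linearity from the left idealizer, so does every $\Fqn$-combination of such elements over varying $\beta$. Fixing a codeword with support $S=\{i\colon b_i\neq0\}$, the family of vectors $(\beta^{q^i})_{i\in S}$ spans $\Fqn^{|S|}$ because the Frobenius powers $x\mapsto x^{q^i}$ are pairwise distinct and hence $\Fqn$-linearly independent; taking suitable combinations therefore isolates each monomial, giving $b_ix^{q^i}\in\cC$ and, after scaling by $b_i^{-1}\in M$, $x^{q^i}\in\cC$ for every $i\in S$. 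Thus $\cC$ is spanned by monomials and equals $\la x^{q^{t_i}}\colon i=0,\ldots,k-1\ra_{\Fqn}$, as claimed. The principal points requiring care are the conjugacy lemma, which licenses sending the assumed idealizer to the scalar field, and this last monomial-isolation step, where the independence of the Frobenius maps --- rather than a bare dimension count --- is what forces the $g_j$ to vanish.
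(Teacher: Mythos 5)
The paper states this result only as a citation of \cite[Theorem 2.2]{CsMaPoZh20} and gives no proof of its own, so there is nothing internal to compare against; judged on its own terms, your argument is correct and follows the standard route of the original reference. The key steps all hold up: the transformation formulas $I_L(\cC')=L_1\circ I_L(\cC)^\rho\circ L_1^{-1}$ and $I_R(\cC')=L_2^{-1}\circ I_R(\cC)^\rho\circ L_2$ are right and justify normalising either (or both) idealizers independently to the scalar field via Skolem--Noether; the left-idealizer case is a clean $\F_{q^n}$-echelon reduction; and the monomial-isolation step in the two-idealizer case is exactly where the $\F_{q^n}$-linear independence of the distinct Frobenius maps (equivalently, that a nonzero $q$-polynomial of $q$-degree less than $n$ cannot vanish on all of $\F_{q^n}$) is needed, as you say. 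The only point you gloss is the echelon reduction when only $I_R(\cC)\cong\F_{q^n}$: there the coefficient space is a direct sum of lines $V_i$ on which $\beta$ acts as multiplication by $\beta^{q^i}$, and one should note that each $V_i$ is still a simple module so the flag/pivot argument goes through verbatim, the resulting coordinate functions $g_j$ being additive and intertwining the twisted actions, hence $\F_q$-linear because the twist is trivial on $\F_q$ --- which is precisely the weaker linearity the statement asserts.
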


\begin{proposition}\label{p:leftid}
If $f(x)\in\Fqn[x]$ is a scattered polynomial not $\GaL$-equivalent to a polynomial of pseudoregulus type,
then $G_f^\circ$ is a subring of $\Fqn^{2\times2}$ isomorphic to
a proper subfield $\F_{q^t}$ of $\Fqn$.
\end{proposition}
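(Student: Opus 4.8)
The plan is to establish the ring/field structure of $G_f^\circ$ first, and only afterwards to use the exclusion of the pseudoregulus type to force properness of the subfield. By Remark~\ref{isofields}, the map constructed in Proposition~\ref{le:full_auto_MRD} upgrades to a \emph{field} isomorphism between $I_R(\cC_f)$ and $G_f^\circ$; so $G_f^\circ$, endowed with the addition and multiplication it inherits as a subset of $\Fqn^{2\times2}$, is a commutative field with $|G_f^\circ|=q^t$ and $t\mid n$. Since $I_R(\cC_f)$ is a field, Theorem~\ref{idealizersubfields} identifies it abstractly — and hence $G_f^\circ$ as well — with a subfield $\F_{q^t}$ of $\Fqn$. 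This already delivers the ring-theoretic content of the statement; the only remaining point is that the subfield is \emph{proper}, i.e.\ $t<n$.

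To prove $t<n$ I would argue by contradiction and assume $t=n$, so that $I_R(\cC_f)\cong\Fqn$. Recall from the discussion just preceding Proposition~\ref{le:full_auto_MRD} that for every scattered $f(x)$ the left idealizer $I_L(\cC_f)$ is already isomorphic to $\Fqn$ (the scalar multiplications $x\mapsto\lambda x$ lie in $I_L(\cC_f)$, and by Theorem~\ref{idealizersubfields} no larger field can occur). Under the assumption $t=n$, \emph{both} idealizers of $\cC_f$ are then isomorphic to $\Fqn$. As $\cC_f=\la x,f(x)\ra_{\Fqn}$ has size $q^{2n}$, Theorem~\ref{th:classification} applies with $k=2$ and yields that $\cC_f$ is equivalent to $\la x^{q^{t_0}},x^{q^{t_1}}\ra_{\Fqn}$ for suitable $0\le t_0<t_1\le n-1$.

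The final step is to recognise this normalized code as one of pseudoregulus type. Composing on the left with the invertible map $x\mapsto x^{q^{n-t_0}}$ sends $\la x^{q^{t_0}},x^{q^{t_1}}\ra_{\Fqn}$ to $\la x,x^{q^{s}}\ra_{\Fqn}=\cC_g$, where $g(x)=x^{q^{s}}$ and $s=t_1-t_0$ with $1\le s\le n-1$. Because code equivalence preserves the MRD property with minimum distance $n-1$, the polynomial $g(x)$ is scattered, which forces $(s,n)=1$, i.e.\ $g(x)$ is of pseudoregulus type. By Theorem~\ref{scatteredvscodes}, the equivalence of $\cC_f$ and $\cC_g$ means that $f(x)$ and $g(x)$ are $\GaL$-equivalent, contradicting the hypothesis that $f(x)$ is not $\GaL$-equivalent to a polynomial of pseudoregulus type. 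Hence $t<n$, and $G_f^\circ\cong\F_{q^t}$ is a proper subfield of $\Fqn$.

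The main obstacle I anticipate lies in this last step: one must verify that the notion of code equivalence supplied by Theorem~\ref{th:classification} coincides with the $(L_1,L_2,\rho)$-equivalence to which Theorem~\ref{scatteredvscodes} applies, and that the reduction of $\la x^{q^{t_0}},x^{q^{t_1}}\ra_{\Fqn}$ to the monic form $\la x,x^{q^{s}}\ra_{\Fqn}$ genuinely stays within this equivalence class (rather than only up to an adjoint/transpose operation, which would swap the two idealizers). Once these identifications are pinned down, the condition $(s,n)=1$ and the resulting contradiction follow immediately.
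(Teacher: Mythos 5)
Your proposal is correct and follows essentially the same route as the paper's proof: establish via Proposition~\ref{le:full_auto_MRD} that $G_f^\circ$ is a field isomorphic to $I_R(\cC_f)$, then rule out $I_R(\cC_f)\cong\Fqn$ by combining Theorem~\ref{th:classification} with Theorem~\ref{scatteredvscodes} to contradict the non-pseudoregulus hypothesis. The paper states this more tersely, but your added details (the reduction of $\la x^{q^{t_0}},x^{q^{t_1}}\ra_{\Fqn}$ to monic form and the derivation of $(\ell,n)=1$ from the MRD property) are exactly the steps it leaves implicit.
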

\begin{proof}
By Proposition \ref{le:full_auto_MRD} the subring $G_f^\circ$ of $\Fqn^{2 \times 2}$ is a field 
isomorphic to the right idealizer of the MRD code $\mathcal{C}_{f}$ associated with $f$. If this is not 
isomorphic to a proper subfield of $\Fqn$, then by Theorem	\ref{th:classification}, $\cC_f$ is equivalent to the MRD $\langle x, x^{q^\ell} \rangle_{\Fqn}$ for some $\ell \in \{0,1,\ldots,n-1\}$ with $(\ell,n)=1$. By Theorem \ref{scatteredvscodes}, $f$ is equivalent to a scattered polynomial of pseudoregulus type, a contradiction. 
\end{proof}

\subsection{Stabilizers of all known maximum scattered subspaces}

In this section we present all known scattered polynomials along with the stabilizers
of their related subspaces, up to one case that we will deal with  in Section \ref{s:sf}.
As usual, $\N_{q^n/q^t}(x)=x^{(q^n-1)/(q^t-1)}$, $t$ a divisor of $n$, denotes the \emph{norm} 
of $x\in\Fqn$ over $\Fqt$. 
We will identify any nonsingular matrix $A\in\Fqn^{2\times2}$ with the map
\[ (x,y)\mapsto (x,y)A \] in $\GL(2,q^n)$.

\medskip

\noindent \textbf{1}. $f(x)=x^{q^s}$, $(s,n)=1$ \emph{(pseudoregulus type)}. In this case 
\[ G_f=\{\diag(\alpha,\alpha^
{q^s})\colon\alpha\in\Fqn^*\}. \]
Cf.\ \cite{CsMaPoZa18}.

\medskip

\noindent \textbf{2}. $f(x)=x^{q^s}+\delta x^{q^{n-s}}$, $(s,n)=1$, $n>3$, $\N_{q^n/q}(\delta)\neq0,1$ \emph{(Lunardon-Polverino type)}. 
For even $n$:
\[ G_f=\{\diag(\alpha,\alpha^{q})\colon\alpha\in\F_{q^2}^*\}. \]
For odd $n$:
\[ G_f=\{\diag(\alpha,\alpha)\colon\alpha\in\Fq^*\}. \]
Cf.\ \cite{CsMaPoZa18}.

\medskip

\noindent \textbf{3}. $f(x)=\delta x^{q^s}+x^{q^{s+n/2}}$, 
$ n \in \{6, 8\}$, $(s, n/2) = 1$, $\N_{q^n/q^{n/2}} (\delta) \not \in 
\{0, 1\}$, with some conditions on $\delta$ and $q$.
In this case 
\[ G_f=\{\diag(\alpha,\alpha^{q^s})\colon\alpha\in\F_{q^{n/2}}^*\}. \]
Cf.\ \cite{CsMaPoZa18}.

\medskip

\noindent \textbf{4}. $f(x)=x^{q}+x^{q^{3}}+\delta x^{q^{5}} \in \F_{q^6}[x]$,  with 
$\delta^2+\delta=1$ for $q$ odd;  some conditions on $\delta$ and $q$, for $q$ even.
In this case 
\[ G_f=\{\diag(\alpha,\alpha^{q})\colon\alpha\in\F_{q^{2}}^*\}. \]
Cf.\ \cite{scatteredeven, CsMaZu18,MaMoZu20}.

\medskip

\noindent \textbf{5}.
\begin{equation}\label{eq:psi}
\psi_{h,t,s}(x)= x^{q^s}+x^{q^{s(t-1)}}+h^{1+q^s}x^{q^{s(t+1)}}+ h^{1-q^{s(2t-1)}}x^{q^{s(2t-1)}},
\end{equation}
$n = 2t$, $t \geq 3$, $(s,n)=1$, $q$ odd, $\N_{q^n/q^t}(h)=-1$.
Cf.\ \cite{LoMaTrZh21,LoZa21,NeSaZu21}.
The stabilizer is described below for $t>4$.
We add a proof since the description in the quoted works is not completely explicit.

\begin{proposition}\cite{LoMaTrZh21,NeSaZu21}\label{p:Gpsi}
Let $\psi_{h,t,s}$ be the scattered linearized polynomial in \eqref{eq:psi}. 
Assume $t>4$. Then
\begin{equation*}
   G_{\psi_{h,t,s}}=\biggl \{ \begin{pmatrix}
\alpha & 0 \\
0 & \alpha^{q}
\end{pmatrix}
\colon \alpha \in \F_{q^2}^* \biggr \}
\end{equation*}
if $t$ is even, and
\begin{equation}\label{eq:statodd}
G_{\psi_{h,t,s}}=\biggl \{ \begin{pmatrix}
\alpha & \xi(h^{q^s}+h^{q^{s(t-1)}})\\
\frac{\xi}{h^{q^s}+h^{q^{s(t-1)}}} & \alpha
\end{pmatrix}
\colon \alpha \in \F_{q},\ \xi^{q^s}+\xi=0 \ \textnormal{and} \  (\alpha,\xi) \neq (0,0) \biggr\}
\end{equation}
if $t$ is odd. In particular for $h \in \Fqt$
\begin{equation*}\label{eq:statoddh}
G_{\psi_{h,t,s}}=\biggl \{ \begin{pmatrix}
\alpha & -4\eta\\
\eta & \alpha
\end{pmatrix}
\colon \alpha \in \F_{q},\ \eta^{q^s}+\eta=0 \ \textnormal{and} \  (\alpha,\eta) \neq (0,0) \biggr\}.
\end{equation*}
\end{proposition}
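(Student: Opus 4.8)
The plan is to turn the condition $A=\begin{pmatrix}a&b\\c&d\end{pmatrix}\in G_{\psi_{h,t,s}}$, $A$ nonsingular, into the polynomial identity
\[
\psi_{h,t,s}\bigl(ax+c\,\psi_{h,t,s}(x)\bigr)=bx+d\,\psi_{h,t,s}(x)\quad\text{for all }x\in\Fqn,
\]
and then to compare the coefficients of the two $q$-polynomials. Writing $\sigma\colon x\mapsto x^{q^{s}}$, the hypothesis $(s,n)=1$ makes $\sigma$ a generator of $\Gal(\Fqn/\Fq)$ and forces $s$ to be odd; since $st\equiv t\pmod{2t}$, the support of $\psi_{h,t,s}$, read as a set of powers of $\sigma$, is $S=\{1,\,t-1,\,t+1,\,2t-1\}$, with coefficients $a_{1}=a_{t-1}=1$, $a_{t+1}=h\,h^{q^{s}}$ and $a_{2t-1}=h/h^{q^{-s}}$. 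Setting $h_{i}=\sigma^{i}(h)$, the normalisation $\N_{q^{n}/q^{t}}(h)=-1$ is equivalent to the relation $h_{i+t}=-h_{i}^{-1}$, which I will use repeatedly to collapse indices. I also record $\beta=h_{1}+h_{t-1}$ and note that $\beta\neq0$ has to be checked first (it is immediate in the subcase $h\in\Fqt$, where $t$ odd forces $h\in\Fq$ with $h^{2}=-1$, hence $\beta=2h$).

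First I would expand the identity as $\sum_{m}C_{m}\sigma^{m}(x)=0$ and exploit the hypothesis $t>4$: then the ``support sums'' $S+S=\{0,2,-2,t,t+2,t-2\}$ are pairwise distinct and disjoint from $S$, so the system of coefficient equations decouples cleanly into three blocks. The block indexed by $m\in S$ reads $\sigma^{m}(a)=d$; comparing $m=1$ and $m=2t-1$ gives $\sigma^{2}(a)=a$, i.e.\ $a\in\F_{q^{2}}$ and $d=a^{q}$, and then the parity of the remaining exponents in $S$ forces, for $t$ odd, $a=d=:\alpha\in\Fq$ (while for $t$ even it imposes no new condition). The block $m\in(S+S)\setminus\{0\}$ produces five $\Fq$-linear conditions $\Phi_{m}(c)=0$ on $c$, and the single equation $m=0$ expresses $b=\Phi_{0}(c)$.

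For $t$ even I would simply verify directly that each $\diag(\alpha,\alpha^{q})$ with $\alpha\in\F_{q^{2}}^{*}$ satisfies the identity: since every exponent in $S$ is odd, $\sigma^{k}(\alpha)=\alpha^{q}$ for all $k\in S$, so $\psi_{h,t,s}(\alpha x)=\alpha^{q}\psi_{h,t,s}(x)$. This yields $q^{2}-1$ elements of the stabilizer. For $t$ odd the heart of the argument is to show that the four conditions $\Phi_{\pm2}(c)=\Phi_{t\pm2}(c)=0$ all reduce, via $h_{i+t}=-h_{i}^{-1}$, to the single relation
\[
h_{2}\,\sigma(c)+h_{0}\,\sigma^{t+1}(c)=0,
\]
and that the fifth condition $\Phi_{t}(c)=0$ together with it is satisfied exactly by $c=\xi/\beta$ with $\xi^{q^{s}}+\xi=0$; a short computation then gives $b=\Phi_{0}(c)=\xi\beta$. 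Combined with $a=d=\alpha$, this produces precisely the matrices in \eqref{eq:statodd}. The specialisation to $h\in\Fqt$ follows by substituting $\beta=2h$, $h^{2}=-1$ and $\eta=\xi/(2h)$.

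Finally I would pin down the size. By Proposition~\ref{le:full_auto_MRD} and Remark~\ref{isofields}, $G_{\psi_{h,t,s}}^{\circ}$ is a field with $|G_{\psi_{h,t,s}}^{\circ}|=q^{t'}$, $t'\mid n$; the idealizer computations in \cite{LoMaTrZh21,NeSaZu21} give $t'=2$, so $|G_{\psi_{h,t,s}}|=q^{2}-1$. Since in both parities I will have exhibited exactly $q^{2}-1$ matrices lying in the stabilizer (for $t$ odd they are all nonsingular because $\det=\alpha^{2}-\xi^{2}$ with $\xi^{2}$ a nonsquare of $\Fq$, so the determinant vanishes only at $(\alpha,\xi)=(0,0)$), the exhibited sets exhaust $G_{\psi_{h,t,s}}$. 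The main obstacle is the middle step for $t$ odd: the bookkeeping that collapses the four $\pm2$-conditions to one relation and verifies $\Phi_{t}=0$ and $\Phi_{0}=\xi\beta$. Conceptually the delicate point is the upper bound, i.e.\ establishing that the solution space for $c$ is genuinely one-dimensional; I obtain this from the cited idealizer size, whereas a self-contained alternative would require proving directly that the two surviving relations are equivalent to $\sigma(c\beta)=-c\beta$.
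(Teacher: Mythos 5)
Your proposal is correct and follows essentially the same route as the paper: both reduce membership in $G_{\psi_{h,t,s}}$ to the polynomial identity $\psi_{h,t,s}(ax+c\,\psi_{h,t,s}(x))=bx+d\,\psi_{h,t,s}(x)$ and compare coefficients of the resulting $q$-polynomials, with $t>4$ guaranteeing that the exponent blocks $S$ and $S+S$ decouple. The only difference is in what is delegated to \cite{LoMaTrZh21,NeSaZu21}: the paper cites them for the determination of $a$, $c$, $d$ and then computes $b$ explicitly, whereas you carry out the full coefficient comparison and instead cite the idealizer size $|G^\circ_{\psi_{h,t,s}}|=q^2$ to obtain the upper bound --- both divisions of labor are legitimate.
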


\begin{proof}

Let 
$\begin{pmatrix}
    \alpha & \beta \\
    \gamma & \delta 
\end{pmatrix}$
be an element in $G_{\psi_{h,t,s}}$. Then
\begin{equation*}
    \beta x+\delta\psi_{h,t,s}=\psi_{h,t,s}(\alpha x)+\psi_{h,t,s}(\gamma\psi_{h,t,s}(x))
\end{equation*}
As in the proof of \cite[Proposition 4.13]{NeSaZu21} and \cite[Theorem 4.2]{LoMaTrZh21}, one obtains
that $\alpha \in \F_{q^2}$ and $\beta=\gamma= 0$, if $t$ is even, and $\alpha \in \Fq$ and 
\begin{equation}\label{eq:gamma}
\gamma =\frac{\xi}{h^{q^s} - h^{-q^{(2t-1)s}}},
\end{equation}
where $\xi^{q^s}+\xi=0$, if $t$ is odd.
Matching the coefficient of $x$ in both sides, one gets
$$\beta=\gamma^{q^s}h^{q^s-1}-\gamma^{q^{s(t-1)}}h^{q^{s(t-1)}-1}-h^{1-q^{s(t+1)}}\gamma^{q^{s(t+1)}}+h^{1-q^{s(2t-1)}}\gamma^{q^{s(2t-1)}}.$$
Then, by \eqref{eq:gamma}, one obtains
\begin{equation*}
\begin{split}
    \beta&=-\xi \biggl (\frac{h^{q^s}}{h^{q^{2s}+1}-1}+\frac{h^{q^{s(t-1)}}}{-1-h^{1-q^{s(t-2)}}}+\frac{h^{q^s-q^{st}}}{h^{q^{s(t+2)}}-h^{-q^{st}}}+\frac{h^{q^{s(t-1)}-q^{st}}}{h-h^{-q^{s(2t-2)}}}\biggr )\\
    &=-\xi \biggl (\frac{h^{q^s}}{h^{q^{2s}+1}-1}+\frac{h^{q^{s(t-1)}}}{h^{q^{s(2t-2)}+1}-1}+\frac{h^{q^s}}{h^{q^{st}+q^{s(t+2)}}-1}+\frac{h^{q^{s(t-1)}}}{h^{q^{s(t-2)}+q^{st}}-1}\biggr )\\
    &=\xi(h^{q^s}+h^{q^{s(t-1)}}).    \end{split}
\end{equation*}
\end{proof}

\begin{remark}\label{t=3,4}
Proposition \ref{p:Gpsi} can be extended to $t\ge3$. Indeed, for $t=3$, it is enough to note that any matrix of the set in the right-hand side of Formula \eqref{eq:statodd} stabilizes $U_{\psi_{h,3,s}}$. Then the stabilizer in $\GL(2,q^n)$ 
 of $U_{\psi_{h,3,s}}$ coincides with this set, since it has to be  a proper subfield of $\F_{q^6}$ (cf.\ Proposition~\ref{p:leftid}) and contains a matrix field of order $q^2$.
For $t=4$, see Remark~\ref{r:ext}.
\end{remark}

\begin{remark}
    Proposition \ref{p:Gpsi}, Remark~\ref{t=3,4} and ~\ref{r:ext} extend \cite[Proposition 7.8, Corollary 7.9 and 7.10]{BarZiZu} for $h \in \F_{q^n} \setminus \F_{q^t}$.
\end{remark}

\subsection{Simultaneous diagonalization}

In \cite{willett}, Willett specialized the results in \cite{beard1,beard2} and characterized 
all subrings $\mathcal{M}$ of $r \times r$ matrices over a finite field which are fields. 
We will briefly recall the main results in \cite{willett}.\\
Let $\Fq^{r \times r}$ be the ring of all square matrices of order $r$ over the finite field $\Fq$, $q=p^e$ and consider $\mathcal{F}_{q,r}$ the collection of all subsets of $\Fq^{r \times r}$ which are fields with the matrix addition and multiplication inherited from $\Fq^{r \times r}$. Clearly, since the scalar matrices of $\Fq^{r \times r}$ form a matrix field, $\mathcal{F}_{q,r}$ is not empty.\\
Note that a matrix field $\mathcal{M}$ does not need to have the identity matrix $I_r \in \F_{q}^{r \times r}$ as its own identity, and hence the
invertibility of a matrix in $\cM$ is not equivalent to non-singularity.
Moreover, each nonzero matrix in $\mathcal{M}$ has the same
rank of  its identity element.

A monic polynomial
\begin{equation*}
    f(x)=x^k-a_1x^{k-1}-\ldots-a_k \in \Fq[x]
\end{equation*}
is called \textit{primitive} if it has a primitive element of the extension $\F_{q^{k}}$ as root.

\begin{theorem}\cite[Theorem 1]{willett}
Let $\mathcal{M} \subseteq \F_{q}^{r\times r}$,
$|\mathcal{M}|=p^k$, for some $k \geq 1$. Then $\mathcal{M}$ is a field if and only if 
\begin{equation}\label{Mshape}
    \mathcal{M}=\{A^i \colon  1 \leq i \leq p^k -1\} \cup \{O\}
\end{equation}
for some matrix $A$ which is similar over $\Fq$ to a matrix  of the form $\mathrm{diag}(0,\ldots,0,D)$, with $D \in \Fq^{\ell \times \ell}$ satisfying $f(D)=O$, where
$f(x)=x^k-a_1x^{k-1}-\ldots-a_k$ is a primitive polynomial over $\F_{p}$.
\end{theorem}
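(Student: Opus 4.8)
The plan is to prove the two implications separately, exploiting the cyclicity of the multiplicative group of a finite field together with the structure of the idempotent matrix that serves as the identity element of $\mathcal{M}$. Throughout, the recurring reduction is that conjugating $\mathcal{M}$ by an invertible $P\in\F_q^{r\times r}$ preserves the property of being a matrix field, so I may freely pass to a convenient similar form.

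For the \emph{only if} direction, suppose $\mathcal{M}$ is a field with $|\mathcal{M}|=p^k$. Since the addition on $\mathcal{M}$ is inherited from $\F_q^{r\times r}$, its additive identity $0_{\mathcal{M}}$ satisfies $0_{\mathcal{M}}+0_{\mathcal{M}}=0_{\mathcal{M}}$, forcing $0_{\mathcal{M}}=O$, so $O\in\mathcal{M}$. The multiplicative group $\mathcal{M}\setminus\{O\}$ is cyclic of order $p^k-1$; I would fix a generator $A$, so that $\mathcal{M}=\{A^i\colon 1\le i\le p^k-1\}\cup\{O\}$ and $E:=A^{p^k-1}$ is the multiplicative identity of $\mathcal{M}$, an idempotent matrix ($E^2=E$). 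An idempotent is diagonalizable over $\F_q$ with eigenvalues in $\{0,1\}$, hence similar to $\diag(O,I_\ell)$ with $\ell=\rk E$; after conjugating by the corresponding $P$ I may assume $E=\diag(O,I_\ell)$. From the identities $EA=AE=A$ I would then read off that $A=\diag(O,D)$ for some $D\in\F_q^{\ell\times\ell}$: left multiplication by $E$ annihilates the top $r-\ell$ rows and right multiplication the left $r-\ell$ columns, so only the bottom-right block survives. Restricting to the range of $E$ gives a field $\mathcal{M}_0=\{D^i\colon 1\le i\le p^k-1\}\cup\{O\}$ of order $p^k$ with identity $I_\ell$; since $A$ (hence $D$) generates the multiplicative group, the prime subfield $\F_p\cdot I_\ell$ together with $D$ generates $\mathcal{M}_0$, so $\F_p[D]=\mathcal{M}_0\cong\F_{p^k}$. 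Thus the minimal polynomial $f$ of $D$ over $\F_p$ is irreducible of degree $k$, $D$ has multiplicative order $p^k-1$ so it is a primitive element, whence $f$ is a primitive polynomial and $f(D)=O$.

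For the \emph{if} direction, I am given $A$ similar to $\diag(O,D)$ with $f(D)=O$ for a primitive polynomial $f$ of degree $k$ over $\F_p$. Conjugating reduces to $A=\diag(O,D)$, so $A^i=\diag(O,D^i)$ and every computation lives in the bottom-right block. Because $f$ is primitive, it is irreducible of degree $k$, hence equal to the minimal polynomial of $D$, so $\F_p[D]\cong\F_p[x]/(f(x))\cong\F_{p^k}$ is a field; moreover $D$ has multiplicative order $p^k-1$, so $D^1,\dots,D^{p^k-1}$ exhaust $\F_p[D]^*$. Consequently $\mathcal{M}=\{A^i\colon 1\le i\le p^k-1\}\cup\{O\}$ has exactly $p^k$ distinct elements and is closed under multiplication ($A^iA^j=A^{i+j}$ with $A^{p^k-1}=E$ acting as identity) and under addition (a sum of powers of $D$ lies in the field $\F_p[D]$, hence is $O$ or again a power of $D$). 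Therefore $\mathcal{M}$ is a field, and transporting back by $P$ completes the argument.

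The step I expect to require the most care is the reduction to the block form $\diag(O,D)$, together with the point flagged just before the statement: the identity of $\mathcal{M}$ need not be $I_r$. One must argue cleanly that the idempotent $E$ has a well-defined rank $\ell$, that consequently every nonzero element $A^i=\diag(O,D^i)$ has that same rank $\ell$ (being invertible in $\mathcal{M}_0$), and that the entire field is carried on the $\ell$-dimensional range of $E$. The remaining delicate point is the upgrade from \emph{irreducible} to \emph{primitive}: this is not automatic from $f(D)=O$ but follows precisely by matching the multiplicative order of $D$ with $p^k-1$, which is exactly the cyclicity of $\mathcal{M}\setminus\{O\}$.
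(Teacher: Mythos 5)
The paper does not prove this statement: it is quoted verbatim as \cite[Theorem 1]{willett}, so there is no internal proof to compare against. Your argument is correct and self-contained, and it is the natural one: in the forward direction you correctly isolate the idempotent identity $E$ of $\mathcal{M}$ (which need not be $I_r$), diagonalize it to $\diag(O,I_\ell)$, use $EA=AE=A$ to confine the generator $A$ of the cyclic group $\mathcal{M}\setminus\{O\}$ to the block $\diag(O,D)$, and then identify $\F_p[D]$ with $\mathcal{M}_0\cong\F_{p^k}$, so that the minimal polynomial of $D$ is irreducible of degree $k$ and primitive because $D$ has multiplicative order $p^k-1$; the converse direction is routine once $f$ irreducible forces $f$ to be the minimal polynomial of $D$ and primitivity gives the order count. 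The two delicate points you flagged — that the identity of $\mathcal{M}$ may be a proper idempotent, and that primitivity of $f$ is exactly the cyclicity statement rather than a consequence of $f(D)=O$ alone — are both handled correctly.
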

\begin{lemma} \cite[Lemma 3]{willett}\label{factorization}
Let $f(x)$ be a monic irreducible polynomial of degree $k$ over $\F_p$ and let $d= (k,e)$. Then $f(x)$ factors into $d$ irreducible polynomials $f_i(x)$, $0 \leq i \leq d-1$, $\deg f_i(x)=k/d$, over $\Fq=\F_{p^e}$.
\end{lemma}
\begin{theorem}\cite[Theorem 2]{willett}\label{similardiag}
If $D$ is an $\ell \times \ell$ matrix over $\Fq$ such that $f(D)=O$ where $f(x)$ is a monic irreducible polynomial of degree $k$ over $\F_p$, then $D$ is similar over $\Fq$ to a matrix of the form
\begin{equation}\label{diag}
    \mathrm{diag}(D_{n_0},D_{n_1},\ldots,D_{n_\tau}) \quad \quad 0 \leq n_i \leq d-1
\end{equation}
with $D_i=\mathrm{diag}(C(f_i),C(f_i),\ldots,C(f_i))$, where $C(f_i)$ is the companion matrix of the irreducibile factor $f_i(x)$ of $f(x)$.
\end{theorem}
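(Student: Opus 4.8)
The plan is to identify the block form \eqref{diag} with the rational canonical form of $D$ in the special situation where the minimal polynomial is squarefree, and to obtain it from the structure theory of finitely generated modules over the principal ideal domain $\Fq[x]$. First I would observe that $f(x)$, being irreducible over the perfect field $\F_p$, is separable, so that the factorization $f(x)=\prod_{i=0}^{d-1}f_i(x)$ provided by Lemma~\ref{factorization} (with $d=(k,e)$ and each $f_i$ irreducible of degree $k/d$ over $\Fq$) consists of pairwise distinct factors. Since $f(D)=O$, the minimal polynomial $m_D(x)$ of $D$ over $\Fq$ divides $f(x)$; hence $m_D(x)=\prod_{i\in S}f_i(x)$ for some subset $S\subseteq\{0,\dots,d-1\}$, and in particular $m_D$ is squarefree.

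Next I would regard $V=\Fq^{\ell}$ as a module over $\Fq[x]$ by letting $x$ act as $D$, so that \emph{similarity over $\Fq$} of two matrices amounts to \emph{isomorphism} of the corresponding modules, and the block form to be proved becomes a statement about a direct-sum decomposition of $V$. By the primary decomposition theorem applied to the squarefree polynomial $m_D=\prod_{i\in S}f_i$, one gets $V=\bigoplus_{i\in S}V_i$, where $V_i=\ker f_i(D)$ is $D$-invariant and the restriction $D|_{V_i}$ has minimal polynomial $f_i$. Each quotient $\Fq[x]/(f_i(x))$ is a field isomorphic to $\F_{q^{k/d}}$, and the $\Fq[x]$-action makes $V_i$ a vector space over this field; consequently $V_i$ is a free $\Fq[x]/(f_i)$-module, i.e.\ a direct sum of $m_i=\dim_{\Fq}V_i/(k/d)$ copies of the cyclic module $\Fq[x]/(f_i)$.

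Choosing on each cyclic summand the basis $1,x,\dots,x^{k/d-1}$ represents the action of $D$ by the companion matrix $C(f_i)$; collecting the $m_i$ summands gives $D|_{V_i}$ represented by $D_i=\diag(C(f_i),\dots,C(f_i))$, and assembling the blocks over $i\in S$ yields exactly the form \eqref{diag}, with the indices $n_0,\dots,n_\tau$ enumerating $S\subseteq\{0,\dots,d-1\}$. This accounts for every block and, since $\sum_{i\in S}m_i\,(k/d)=\dim_{\Fq}V=\ell$, for the full matrix.

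I expect the only genuinely delicate point to be the \emph{squarefreeness} of $m_D$: it is what guarantees that each primary component $V_i$ is a sum of bare companion blocks $C(f_i)$ rather than of larger cyclic blocks associated with some power $f_i^{a}$, $a>1$. This in turn rests on two facts I would flag explicitly --- that $m_D$ divides $f$, and that $f$ (being irreducible over $\F_p$, hence separable) has distinct irreducible factors over $\Fq$ by Lemma~\ref{factorization}. Everything else is the standard passage between matrix similarity and $\Fq[x]$-module isomorphism.
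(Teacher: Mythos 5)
Your proof is correct, but note that the paper offers no proof of this statement to compare against: Theorem~\ref{similardiag} is quoted verbatim from Willett's article (reference [Theorem 2] of \emph{Matrix fields over} $\mathrm{GF}(q)$) and is used in this paper as an imported result. What you have written is therefore a self-contained substitute for the external citation rather than an alternative to an argument in the text. As such it is sound: the two load-bearing points are exactly the ones you flag, namely that $m_D$ divides $f$, and that $f$ --- irreducible over the perfect field $\F_p$, hence separable --- splits over $\Fq$ into the $d=(k,e)$ \emph{pairwise distinct} irreducibles $f_i$ of Lemma~\ref{factorization}, so that $m_D=\prod_{i\in S}f_i$ is squarefree. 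From there the primary decomposition $V=\bigoplus_{i\in S}\ker f_i(D)$, the observation that each $\ker f_i(D)$ is a vector space over the field $\Fq[x]/(f_i)\cong\F_{q^{k/d}}$ and hence a direct sum of cyclic modules $\Fq[x]/(f_i)$, and the choice of basis $1,x,\dots,x^{k/d-1}$ on each summand yield precisely the block form \eqref{diag}; this is the standard rational-canonical-form argument in the squarefree case. One cosmetic remark: you should note explicitly that $\ker f_i(D)\neq 0$ for each $i\in S$ (otherwise $m_D$ would be a proper divisor of $\prod_{i\in S}f_i$), so that every index in $S$ genuinely contributes a block; with that said, the dimension count $\sum_{i\in S}m_i\,(k/d)=\ell$ closes the argument.
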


\begin{theorem}\label{t:subfield}
Let $\cM$ be a matrix field in $\Fqn^{2\times2}$ isomorphic to $\F_{q^t}$, $1 \leq t \leq n$,
$t$ a divisor of $n$.
If $\mathcal{M}$ contains a nonsingular matrix, then there is $P\in\GL(2,q^n)$ such that 
\begin{equation}\label{eq:subfield}
P\cM P^{-1}=\left\{\begin{pmatrix}x&0\\ 0& x^ \sigma \end{pmatrix}\colon x\in\F_{q^t}\right\},
\end{equation}
where $\sigma \in \Aut(\Fqt)$. In addition if $\diag(x,x) \in \mathcal{M}$ for any $x \in \Fq$, then $\sigma \in \Gal(\Fqt| \Fq)$.
\end{theorem}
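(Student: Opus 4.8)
The plan is to reduce the statement to the diagonalization of a single generator of $\cM$ and then read off the shape of the whole field. First I would observe that, since $\cM$ contains a nonsingular matrix, its identity element $E$ has rank $2$ (as recalled before the theorem, every nonzero element of $\cM$ has the same rank as $E$); being idempotent and invertible, $E=I_2$, so $\cM$ is a field with unit $I_2$ and characteristic $p$, containing $\F_p I_2$. Writing $q=p^e$, we have $|\cM|=q^t=p^{te}$, and since $\cM\cong\Fqt$ is a finite field it is generated over its prime field by one element: I take $A$ to be a generator of the cyclic group $\cM^{*}$, so that $\cM=\{h(A)\colon h\in\F_p[x]\}$ and the minimal polynomial $g(x)\in\F_p[x]$ of $A$ is (primitive, hence) irreducible of degree $te$.

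Next I would diagonalize $A$ over $\Fqn$. The roots of $g$ are the $\F_p$-conjugates of a primitive element of $\Fqt$, hence they all lie in $\Fqt$; since $t\mid n$ we have $\Fqt\subseteq\Fqn$, so $g$ splits into distinct linear factors over $\Fqn$ (this is Lemma~\ref{factorization} applied with $\Fqn$ in place of $\Fq$, since $(te,ne)=te$). As $g(A)=O$ and $g$ is separable, the minimal polynomial of $A$ over $\Fqn$ divides $g$ and is squarefree, so $A$ is diagonalizable over $\Fqn$: there is $P\in\GL(2,q^n)$ with $PAP^{-1}=\diag(\lambda_1,\lambda_2)$, where $\lambda_1,\lambda_2\in\Fqt$ are roots of $g$. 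Being roots of the same $\F_p$-irreducible polynomial they are conjugate, say $\lambda_2=\lambda_1^{p^j}$, and $\lambda_1$ (a Frobenius image of a generator) still generates $\Fqt$ over $\F_p$.

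Conjugating the whole field by $P$ gives $P\cM P^{-1}=\{\diag(h(\lambda_1),h(\lambda_2))\colon h\in\F_p[x]\}$. Because $\lambda_1$ generates $\Fqt$ over $\F_p$, the first entry $h(\lambda_1)$ runs through all of $\Fqt$; and since $h$ has coefficients in $\F_p$, the Frobenius identity $h(\lambda_1^{p^j})=h(\lambda_1)^{p^j}$ shows the second entry equals $h(\lambda_1)^{p^j}$. Setting $\sigma\colon x\mapsto x^{p^j}$, an element of $\Aut(\Fqt)$, I obtain exactly the form in \eqref{eq:subfield}; the degenerate case $\lambda_1=\lambda_2$, i.e. $A$ scalar, is included with $\sigma=\mathrm{id}$.

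Finally, for the addendum I would use that scalar matrices are fixed by conjugation: if $\diag(x,x)=xI_2\in\cM$ for every $x\in\Fq$, then $xI_2\in P\cM P^{-1}=\{\diag(y,y^\sigma)\colon y\in\Fqt\}$, which forces $x^\sigma=x$ for all $x\in\Fq$; hence $\sigma$ fixes $\Fq$ pointwise, i.e. $\sigma\in\Gal(\Fqt\mid\Fq)$. The main obstacle is the diagonalization step: one must guarantee that $A$ splits over $\Fqn$ itself, with no larger field needed and with eigenvalues generating $\Fqt$, which is precisely where the hypothesis $t\mid n$ and the separability of $g$ enter. The rest is bookkeeping, the only delicate point being that the second diagonal entry is a single, uniform Frobenius power of the first — this relies essentially on $A$ being a generator with $\F_p$-rational minimal polynomial.
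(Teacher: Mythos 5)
Your proof is correct and follows essentially the same route as the paper's: both arguments diagonalize a generator $A$ of $\cM^*$ whose minimal polynomial over $\F_p$ is primitive of degree $te$, use $t\mid n$ to split that polynomial into distinct linear factors over $\Fqn$, and read off $\sigma$ as the Frobenius power relating the two conjugate eigenvalues. The only difference is cosmetic: where the paper invokes Willett's structure theorems (the cyclic form of $\cM$ and the similarity to a diagonal of companion matrices), you re-derive the needed facts directly from the cyclicity of $\cM^*$ and the separability of the minimal polynomial, which makes the argument self-contained without changing its substance.
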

\begin{proof}
Since $\mathcal{M} \in \mathcal{F}_{q^n,2}$ and all its matrices are invertible, the matrix $A$ in \eqref{Mshape} is similar over $\Fqn$ to a matrix $D \in \Fqn^{2 \times 2}$ satisfying $f(D)=O$ where $f(x)$ is a primitive polynomial over $\F_p$  and its degree is $k=et$. Now, by Lemma \ref{factorization}, $f(x)$ factors  in $\Fqn$ into $ (et,en)=et=k$ polynomials of degree 1. Then $f_i(x)=x-\omega^{p^i}$ with $ 0 \leq i \leq k-1$ and $\omega$ is a primitive element of $\Fqt^*$.\\
By Theorem \ref{similardiag}, the matrix $D$ is similar over $\Fqn$ to a matrix of form as in \eqref{diag}. We have to distinguish two cases:
\begin{itemize}
    \item [$i)$] $\tau=0$, then $D$ is similar over $\Fqn$ to a matrix \[D_{n_0}=\diag(C(f_{n_0}),C(f_{n_0}))=\mathrm{diag}(\omega^{p^{n_0}},\omega^{p^{n_0}})\]
    and hence there exists a matrix $P \in \GL(2,q^n)$ such that 
    \[P\mathcal{M} P^{-1}=\{\mathrm{diag}(x,x) \colon x \in \Fqt\}.\]
    \item [$ii)$] $\tau=1$, then $D$ is similar over $\Fqn$ to a matrix
    \[\mathrm{diag}(C(f_{n_0}),C(f_{n_1}))=\mathrm{diag}(\omega^{p^{n_0}},\omega^{p^{n_1}}).\]
    Denoted by $\sigma$ the map $x \in \Fqt \longmapsto x^{p^{n_1-n_0}} \in \Fqt$ and noted that \[\mathrm{diag}(\omega^{ip^{n_0}},\omega^{ip^{n_1}})=\mathrm{diag}(x, x^{\sigma})\] for  $x=\omega^{ip^{n_0}}$, one gets that
    there exists a matrix $P \in \GL(2,q^n)$ such that \[P\mathcal{M} P^{-1}=\{\mathrm{diag}(x,x^{\sigma}) \colon x \in \Fqt\}.\]\end{itemize}
  Finally, if $\diag(x,x) \in \mathcal{M}$ for any $x \in \Fq$, then $\mathcal{M}$ is an $\Fq$-algebra and the automorphism $\sigma$ belongs to $\Gal(\Fqt| \Fq)$.
\end{proof}

Moreover, by \eqref{eq:subfield}, the matrix $P$ diagonalizes all matrices of the matrix field $\mathcal{M}$ and its rows are linearly independent eigenvectors $v_1,v_2 \in \F_{q^n}^2 \setminus \{(0,0)\}$ of all
matrices in $\mathcal{M}$.
{\begin{example}\label{ex:P}
Let $t=n/2$ be an odd integer, $t \geq 3$ and $\psi(x)=\psi_{h,t,s}(x)$ as in \eqref{eq:psi}.
Let $\theta=h^{q^s}+h^{q^{s(t-1)}}$ and \[ P=\begin{pmatrix}1&\theta\\ 1&-\theta\end{pmatrix}. \]
By \eqref{eq:statodd}
\begin{align*}
PG_\psi P^{-1}&=\left\{P\begin{pmatrix}
\alpha & \xi \theta\\
\xi/\theta & \alpha
\end{pmatrix}P^{-1}
\colon \alpha \in \F_{q}, \,\xi^{q^s}+\xi=0 \ \textnormal{and} \  (\alpha,\xi) \neq (0,0) 
\right\}=\\
&=\left\{\begin{pmatrix}\alpha+\xi &0\\
0&\alpha - \xi \end{pmatrix}\colon \alpha \in 
\F_{q},\, \xi^{q^s}+\xi=0 \ \textnormal{and} \  (\alpha,\xi) \neq (0,0)   \right\}.
\end{align*}
Noting
\[
  (\alpha +\xi)^q=\alpha -\xi
\]
leads to 
\[
  PG_\psi P^{-1}=\left\{\begin{pmatrix}a&0\\ 0&a^q\end{pmatrix}\colon a\in\F_{q^2}^*\right\}.
\]
In conclusion, $G_\psi^\circ$ is a subring of $\Fqn^{2\times2}$ isomorphic to $\F_{q^2}$.
\end{example}}

We prove the following result for future reference.

\begin{proposition}\label{p:eigenvectorsnotbelong}
Let $P_1=\langle v_1 \rangle_{\Fqn}$ and $P_2=\langle v_2 \rangle_{\Fqn}$ be points of $\PG(1,q^n)$,
and let $f(x)\in\Fqn[x]$ be a scattered polynomial. 
If $v_1$ and $v_2$ are eigenvectors of all matrices in $G_f$ and $G_f^\circ$ is not isomorphic
to $\Fq$, then $P_i \not \in L_f$ for $i=1,2$.
\end{proposition}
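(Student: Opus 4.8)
The plan is to argue by contradiction, feeding the explicit diagonal shape of $G_f^\circ$ from Theorem~\ref{t:subfield} into the scattered condition~\eqref{e:scattered}. By Remark~\ref{isofields} the ring $G_f^\circ$ is a matrix field isomorphic to $\Fqt$ with $t\mid n$, and the hypothesis ``$G_f^\circ\not\cong\Fq$'' means precisely $t>1$. Moreover $G_f^\circ$ contains the nonsingular scalar matrices $\diag(a,a)$, $a\in\Fq^*$, so $\cM=G_f^\circ$ satisfies the hypotheses of Theorem~\ref{t:subfield}. Conjugating by the matrix $P$ whose rows are the common eigenvectors $v_1,v_2$ gives $PG_f^\circ P^{-1}=\{\diag(x,x^\sigma)\colon x\in\Fqt\}$. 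Reading off the diagonal entries, for every $A\in G_f$ the eigenvalue of $A$ on the line $\la v_1\ra_{\Fqn}$ equals the corresponding $x$, the eigenvalue on $\la v_2\ra_{\Fqn}$ equals $x^\sigma$; and since the zero matrix corresponds to $x=0$, as $A$ runs through $G_f=G_f^\circ\setminus\{O\}$ the value $x$ runs through all of $\Fqt^*$.

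Suppose now, for a contradiction, that $P_1\in L_f$ (the case $i=2$ is identical). By the definition of $L_f$ there is a vector $w\in U_f^*$ with $\la w\ra_{\Fqn}=\la v_1\ra_{\Fqn}$, so $w$ is a nonzero scalar multiple of $v_1$ and is therefore itself a common eigenvector, say $wA=\lambda(A)\,w$ for each $A\in G_f$. Because $A$ stabilizes $U_f$ and $w\in U_f$, we get $wA=\lambda(A)\,w\in U_f$ for every $A\in G_f$. By the first paragraph $\lambda(A)$ ranges over all of $\Fqt^*$, and since $U_f$ is an $\Fq$-subspace containing $0$, it follows that $\Fqt\,w\subseteq U_f$.

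This is the crux. The line $\la v_1\ra_{\Fqn}$ then meets $U_f$ in the $\Fq$-subspace $\Fqt\,w$, which has $\Fq$-dimension $t\ge2$. Hence $\dim_{\Fq}\bigl(U_f\cap\la v_1\ra_{\Fqn}\bigr)\ge t\ge2$, contradicting the scattered condition~\eqref{e:scattered}. Therefore $P_1\notin L_f$, and by symmetry $P_2\notin L_f$.

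I do not foresee any real obstacle: once Theorem~\ref{t:subfield} has diagonalized $G_f^\circ$, the single point demanding care is that the eigenvalues sweep out the \emph{entire} multiplicative group $\Fqt^*$, so that the full field $\Fqt$ (not merely $\Fq$) scales $w$ inside $U_f$. This is exactly what $t>1$ guarantees, and it is precisely this richer action that forces the violation of scatteredness; no computation beyond the above is needed.
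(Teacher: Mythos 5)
Your proof is correct, but it follows a genuinely different route from the paper's. You feed the diagonal form of $G_f^\circ$ (Theorem~\ref{t:subfield} together with Remark~\ref{isofields}) directly into the scattered condition~\eqref{e:scattered}: since the projection of the matrix field $PG_f^\circ P^{-1}$ onto its first diagonal entry is a nonzero ring homomorphism from a field of order $q^t$, the eigenvalues on $\la v_1\ra_{\Fqn}$ sweep all of $\Fqt^*$, so a nonzero $w\in U_f$ spanning $P_1$ would drag the whole $t$-dimensional $\Fq$-space $\Fqt\,w$ into $U_f\cap\la v_1\ra_{\Fqn}$ --- a clean, purely linear-algebraic contradiction with scatteredness. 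The paper instead works projectively: it passes to the induced group $\tilde G_f\le\PGL(2,q^n)$ of order $(q^t-1)/(q-1)$, observes that a projectivity fixing $P_1$, $P_2$ and a third point is the identity, and derives a divisibility contradiction from the orbit decomposition of $L_f\setminus\{P_1,P_2\}$ together with $|L_f|=(q^n-1)/(q-1)$. Your argument is more elementary (no orbit counting, no use of $|L_f|$) and pinpoints the mechanism --- the $\Fqt$-homogeneity of $U_f$ along a common eigenline --- while the paper's count is shorter to state given the machinery already in place; the only cosmetic point in yours is that Theorem~\ref{t:subfield} guarantees \emph{some} diagonalizing $P$, and you should note that its rows necessarily span the eigenspaces $\la v_1\ra_{\Fqn}$, $\la v_2\ra_{\Fqn}$ of any non-scalar element of $G_f$ (which exists since $t>1$), so taking $P$ with rows $v_1,v_2$ is legitimate up to scaling and swapping, neither of which affects the conclusion.
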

\begin{proof}
Let $t>1$, $t \mid n$ and $|G_f| = q^t-1$. Since $f(x)$ is scattered, 
$$G_f \cap \{\diag(x,x) : x \in \Fqn\}= \{ \diag(x,x) : x \in \Fq\},$$ 
and so $G_f$ induces a group $\tilde{G}_f$ in $\PGL(2,q^n)$ of order  
$(q^t-1)/(q-1)$.
Let $\varphi \in \tilde{G}_f$, if $\varphi$ fixes $P_1$, $P_2$ and a point 
$X \in L_f \setminus \{P_1,P_2\}$, then $\varphi=\mathrm{id}$. 
Hence the orbit of such a point under the action of $\tilde{G}_f$ has order $(q^t-1)/(q-1)$. 
Then this number divides the size of $L_f \setminus \{P_1,P_2\}$. 
Since  $(q^t-1)/(q-1)$ divides  $|L_f|$ as well, if  $ 1 \leq |L_f \cap \{P_1,P_2\}| \leq 2$, 
then $(q^t-1)/(q-1) \in \{1,2\}$, a contradiction.
\end{proof}

\section{Standard form}\label{s:sf}

From now on $\cS_{n,q}$ will denote the set of all scattered polynomials $f(x)\in\Fqn[x]$
such that $G_f^\circ$ is not isomorphic to $\Fq$. 

\begin{definition}
Let $h(x)=\sum_{i=0}^{n-1}b_ix^{q^i}$ be a scattered polynomial,
\[\Delta_h=\{(i-j) \mod n\colon b_ib_j\neq0 \, \text{and} \, i \neq j \}\cup\{n\},\] and let $t_h$ be the greatest common divisor
of $\Delta_h$.
If $t_h>1$ then $h(x)$ is in \emph{standard form}.
\end{definition}
For instance, if $h(x)=x^q+\delta x^{q^{n-1}}\in\Fqn[x]$, $\N_{q^n/q}(\delta)\neq0,1$, $n$ even,
then $\Delta_h=\{2,n-2,n\}$ and $t_h=2$.
So, $h(x)$ is in standard form. On the other hand if $n$ is odd, the same $h(x)$ is not in standard form.

\begin{remark}
If $h(x)$ is in standard form, then
\begin{equation}\label{eq:sf}
    h(x)=\sum_{j=0}^{n/t-1}c_jx^{q^{s+jt}},
\end{equation}
where $t=t_h$ divides $n$, and $0\le s<t$.
More precisely, $s$ is coprime with $t$, otherwise $h(x)$ would be $\F_{q^r}$-linear for $r=(s,t)$ 
contradicting the property to be scattered.
\end{remark}

\begin{theorem}\label{t:sf}
Let $h(x)$ be  a scattered polynomial over $\Fqn$.
The following statements are equivalent:
\begin{enumerate}[$(i)$]
\item $|G_h^\circ|=q^T$, $T>1$, and all elements of $G_h$ are diagonal;
\item $h(x)$ is in standard form.
\end{enumerate}
If the conditions $(i)$, $(ii)$ above hold, then $T=t_h$ and
\begin{equation}\label{eq:sfgf}
G_h^\circ=\biggl \{\begin{pmatrix}
    \alpha & 0\\
    0 & \alpha^{q^s}
\end{pmatrix} \colon \alpha \in \F_{q^{T}} \biggr\}
\end{equation}
where $s$ is as in \eqref{eq:sf}.
\end{theorem}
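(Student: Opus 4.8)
The plan is to prove the two implications $(i)\Rightarrow(ii)$ and $(ii)\Rightarrow(i)$ separately, and then to extract the quantitative data ($T=t_h$ and the explicit form of $G_h^\circ$) along the way. For $(ii)\Rightarrow(i)$, suppose $h(x)=\sum_{j=0}^{n/t-1}c_jx^{q^{s+jt}}$ is in standard form with $t=t_h>1$. I would exhibit directly a group of diagonal matrices stabilizing $U_h$: for $\alpha\in\F_{q^t}^*$, consider $\diag(\alpha,\alpha^{q^s})$ and check that $(x,h(x))\mapsto(\alpha x,\alpha^{q^s}h(x))$ maps $U_h$ into itself, i.e. that $\alpha^{q^s}h(x)=h(\alpha x)$ for all $x$. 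Because every exponent appearing in $h$ has the form $q^{s+jt}$, one computes $h(\alpha x)=\sum_j c_j\alpha^{q^{s+jt}}x^{q^{s+jt}}$, and since $\alpha\in\F_{q^t}$ gives $\alpha^{q^{jt}}=\alpha$, this equals $\alpha^{q^s}\sum_j c_j x^{q^{s+jt}}=\alpha^{q^s}h(x)$. This produces a subgroup of $G_h$ of order $q^t-1$, hence $|G_h^\circ|\ge q^t>q$, and together with the $\Fq$-scalars already present it forces $|G_h^\circ|=q^T$ with $T>1$ by Remark~\ref{isofields}; the diagonality of \emph{all} of $G_h$ will follow from the uniqueness part of the argument below.

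For $(i)\Rightarrow(ii)$, assume $|G_h^\circ|=q^T$ with $T>1$ and that every matrix in $G_h$ is diagonal. Since $G_h^\circ$ is a field isomorphic to $\F_{q^T}$ (Remark~\ref{isofields}) consisting of diagonal matrices and containing $\diag(x,x)$ for $x\in\Fq$, Theorem~\ref{t:subfield} applies with $P=I$: the field is $\{\diag(\alpha,\alpha^\sigma):\alpha\in\F_{q^T}\}$ for some $\sigma\in\Gal(\F_{q^T}\mid\Fq)$, say $\sigma:\alpha\mapsto\alpha^{q^s}$. Now I would translate the stabilizer condition back into a condition on the coefficients of $h$. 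The matrix $\diag(\alpha,\alpha^{q^s})$ stabilizes $U_h$ exactly when $\alpha^{q^s}h(x)=h(\alpha x)$ for all $\alpha\in\F_{q^T}$ and all $x\in\Fqn$. Writing $h(x)=\sum_i b_ix^{q^i}$ and comparing coefficients of $x^{q^i}$ yields, for every $i$ with $b_i\ne0$, the identity $\alpha^{q^s}b_i=b_i\alpha^{q^i}$ for all $\alpha\in\F_{q^T}$, i.e. $\alpha^{q^s}=\alpha^{q^i}$ for all $\alpha\in\F_{q^T}$. This says $q^i\equiv q^s\pmod{q^T-1}$, equivalently $i\equiv s\pmod T$. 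Hence every exponent $i$ occurring in $h$ lies in the single residue class $s\bmod T$, so all pairwise differences $i-j$ of occurring exponents are multiples of $T$; thus $T\mid t_h$. This is the step I expect to be the main obstacle: one must be careful that the coefficient-comparison is valid over $\Fqn$ (the $x^{q^i}$ are $\Fq$-linearly independent functions for $0\le i<n$), and that the congruence $\alpha^{q^s}=\alpha^{q^i}$ holding for all $\alpha\in\F_{q^T}$ genuinely forces $i\equiv s\pmod T$ rather than merely a weaker divisibility.

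To finish, I would show the reverse divisibility $t_h\mid T$ and conclude $T=t_h$. By the definition of $t_h$, all exponents of $h$ lie in a common residue class modulo $t_h$, so $h(x)=\sum_j c_jx^{q^{s'+jt_h}}$ is in the shape of \eqref{eq:sf}; applying the $(ii)\Rightarrow(i)$ construction above with $t=t_h$ produces a diagonal subgroup of order $q^{t_h}-1$ inside $G_h$, whence $q^{t_h}\le|G_h^\circ|=q^T$ and $t_h\le T$. Combined with $T\mid t_h$ from the previous paragraph (so $T\le t_h$) this gives $T=t_h$, and then the residue class computed above is exactly $s\bmod T$, identifying the exponent $s$ in \eqref{eq:sf} with the $\sigma$ from Theorem~\ref{t:subfield}. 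This simultaneously yields the displayed form \eqref{eq:sfgf} for $G_h^\circ$. Finally, coprimality $(s,T)=1$ comes for free from the Remark preceding the theorem (otherwise $h$ would be $\F_{q^r}$-linear and not scattered), closing the loop.
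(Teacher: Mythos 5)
Your overall architecture --- direct verification of $\diag(\alpha,\alpha^{q^s})\in G_h$ for $(ii)\Rightarrow(i)$, coefficient comparison against $\diag(\alpha,\alpha^{q^s})$ for $(i)\Rightarrow(ii)$, and the two divisibilities $T\mid t_h$ and $t_h\le T$ combining to give $T=t_h$ --- is exactly the paper's proof. However, there is one genuine gap in the direction $(ii)\Rightarrow(i)$: statement $(i)$ asserts that \emph{all} elements of $G_h$ are diagonal, and you dispose of this by saying it ``will follow from the uniqueness part of the argument below''. That argument is your proof of $(i)\Rightarrow(ii)$, which takes diagonality as a hypothesis, so the justification is circular; exhibiting a diagonal subgroup of order $q^{t}-1$ inside $G_h$ does not by itself exclude non-diagonal elements.

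The missing ingredient is the field structure of $G_h^\circ$ (Remark~\ref{isofields}): since $G_h^\circ$ is a commutative field containing the nonscalar matrix $\diag(\alpha,\alpha^{q^s})$ for any $\alpha\in\F_{q^{t}}\setminus\Fq$ --- whose diagonal entries are distinct because $(s,t)=1$, so its only eigenspaces are $\la(1,0)\ra_{\Fqn}$ and $\la(0,1)\ra_{\Fqn}$ --- every element of $G_h$ commutes with it and is therefore itself diagonal. (The paper phrases this via Theorem~\ref{t:subfield}: the elements of $G_h$ are simultaneously diagonalizable, and the common eigenvectors must be those of the exhibited nonscalar diagonal matrices.) This is a one-line fix, but as written your proof of $(ii)\Rightarrow(i)$ does not establish the diagonality clause. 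A minor further point: in $(i)\Rightarrow(ii)$, Theorem~\ref{t:subfield} only provides \emph{some} conjugating matrix $P$; to ``take $P=I$'' you should observe separately that a field of diagonal matrices containing the $\Fq$-scalars is already of the form $\{\diag(\alpha,\alpha^{q^s})\}$, e.g.\ because its two diagonal projections are $\Fq$-embeddings of $\F_{q^T}$ into $\Fqn$ --- the paper is equally terse here, so this is not a substantive objection.
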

\begin{proof}
Assume $(i)$.
Then, by Remark~\ref{isofields}, $T$ divides $n$. By Theorem~\ref{t:subfield}, $PG_h^\circ P^{-1}$ is in the form \eqref{eq:subfield} for some nonsingular
matrix $P$, and this implies that $G_h^\circ$ is the set of all $\diag(\alpha,\alpha^{q^s})$
with $\alpha\in\F_{q^T}$ for some integer $s$, $0\le s<T$.
Let $\omega\in\F_{q^T}$ such that $\Fq(\omega)=\F_{q^T}$.
Since $G_h$ stabilizes the scattered subspace $U_h$, one gets that
\[h(\omega x)=\omega^{q^s} h(x) \quad \mbox{ for all } x \in \Fqn.\]
This implies that if $h(x)=\sum_{i=0}^{n-1}b_ix^{q^{i}}$, then
\[b_i\omega^{q^i}=b_i\omega^{q^s} \quad \quad \mbox{ for all }  i=0,1,\ldots,n-1.\]
So, if $b_i \not =0$, we have $\omega^{q^{n+i-s}}=\omega$ and $\omega \in \F_{q^{ (T,i-s)}}$. 
Then $T | (i-s)$, so $h(x)$ is in standard form.
Note that $T$ divides $t_h$.

Next assume $(ii)$, and so \eqref{eq:sf} holds with $t=t_h>1$ and $(s,t)=1$.
Then it can be directly checked that 
$\diag(\alpha,\alpha^{q^s})\in G_h$ for all $\alpha\in\F_{q^{t}}^*$.
If $\alpha\in\F_{q^{t}}\setminus\Fq$, then the only eigenvectors of $\diag(\alpha,\alpha^{q^s})$
are in $\la(1,0)\ra_{\Fqn}$ and $\la(0,1)\ra_{\Fqn}$.
Since the elements of $G_h$ are simultaneously diagonalizable, such eigenvectors are common to all 
matrices of $G_h$. 
Also, $t_h\le T$, that together with $T|t_h$ gives $T=t_h$.
\end{proof}
\begin{remark}\label{r:ext}
Since for even $t$ the polynomial $\psi_{h,t,s}(x)$ is in standard form, 
it follows from Theorem~\ref{t:sf} that Proposition~\ref{p:Gpsi} holds also for $t=4$.
\end{remark}
\begin{theorem}
Any scattered polynomial in standard form is bijective.
\end{theorem}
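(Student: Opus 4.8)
The plan is to show that $h$ is injective; since $h$ is an $\Fq$-linear endomorphism of the finite-dimensional space $\Fqn$, injectivity is equivalent to bijectivity, so it suffices to prove $\ker h=\{0\}$. The whole argument rests on upgrading the generic scattered bound on the rank to full rank by exploiting the extra symmetry that the standard form provides.

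First I would extract the crucial rigidity from Theorem~\ref{t:sf}. Writing $h$ in standard form as in \eqref{eq:sf} with $t=t_h>1$ and $(s,t)=1$, that theorem (via \eqref{eq:sfgf}) gives $\diag(\alpha,\alpha^{q^s})\in G_h$ for every $\alpha\in\Fqt^*$. Unwinding the action $(x,y)\mapsto(x,y)A$ on $U_h$, membership of $\diag(\alpha,\alpha^{q^s})$ in $G_h$ is exactly the functional equation
\[
 h(\alpha x)=\alpha^{q^s}h(x)\qquad\text{for all }\alpha\in\Fqt,\ x\in\Fqn
\]
(the case $\alpha=0$ being trivial). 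In other words, $h$ is $\Fqt$-semilinear, viewing $\Fqn$ as an $\Fqt$-vector space of dimension $n/t$.

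The key consequence I would draw is that $\ker h$ is an $\Fqt$-subspace of $\Fqn$: it is an additive subgroup because $h$ is additive, and it is closed under multiplication by $\Fqt$ because $x\in\ker h$ forces $h(\alpha x)=\alpha^{q^s}h(x)=0$. Hence $\dim_{\Fq}\ker h$ is a multiple of $t$. On the other hand, since $h$ is scattered, the code $\cC_h=\la x,h(x)\ra_{\Fqn}$ is an MRD code of minimum distance $n-1$, so every nonzero codeword, in particular $h$ itself, has rank at least $n-1$ as an $\Fq$-endomorphism of $\Fqn$; therefore $\dim_{\Fq}\ker h\le 1$. Combining the two bounds, a nonzero $\Fqt$-subspace would have $\Fq$-dimension at least $t\ge 2$, which is incompatible with $\dim_{\Fq}\ker h\le 1$. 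Thus $\ker h=\{0\}$ and $h$ is bijective.

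I expect no serious obstacle here: the argument hinges entirely on recognizing, via Theorem~\ref{t:sf}, that the standard form promotes $h$ to an $\Fqt$-semilinear map, after which the tension between the $\Fqt$-subspace structure of the kernel (dimension a multiple of $t\ge 2$) and the scattered/MRD bound (kernel of $\Fq$-dimension at most $1$) settles the matter. The only point requiring a little care is correctly reading off the functional equation $h(\alpha x)=\alpha^{q^s}h(x)$ from the explicit shape of $G_h^\circ$ in \eqref{eq:sfgf}, together with the orientation of the matrix action on $U_h$.
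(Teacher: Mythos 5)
Your argument is correct and is essentially the paper's own proof, just spelled out in more detail: the paper also observes that a polynomial in standard form is $\F_{q^t}$-semilinear with $t>1$, so its kernel is an $\F_{q^t}$-subspace, which scatteredness (kernel of $\Fq$-dimension at most $1$) forces to be trivial. The only cosmetic difference is that you extract the semilinearity from Theorem~\ref{t:sf} and phrase the kernel bound via the MRD property of $\cC_h$, whereas the paper reads the identity $h(\alpha x)=\alpha^{q^s}h(x)$ directly off the shape \eqref{eq:sf}; both are equivalent.
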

\begin{proof}
A scattered polynomial in standard form is $\F_{q^t}$-semilinear where $t > 1$, this implies that its kernel is an $\F_{q^t }$-subspace of $\F_{q^n}$ and so it must be bijective.
\end{proof}

\begin{remark}
Note that if $h(x)$ is in standard form, then $h(x)=g(x^{q^s})$ where $g(x)$ 
is an $\Fqt$-linearized polynomial. 
Such $g(x)$ is an \rm{R}-$q^s$-partially scattered polynomial according to the definition given 
in \cite{partially}. 
Indeed, suppose
\begin{equation*}
    \frac{g(y)}{y}=\frac{g(z)}{z}, \quad \frac{y}{z} \in \F_{q^s}
\end{equation*}
with $y,z \in \Fqn^*$. Since $g(x)$ is injective, putting $y=y_0^{q^s}$ and $z=z_0^{q^s}$, one gets
\begin{equation*}
    \frac{g(y^{q^s}_0)}{g(z_0^{q^s})}=\frac{y}{z}=\frac{y_0}{z_0}
\end{equation*}
and so 
\begin{equation*}
    \frac{h(y_0)}{y_0}=\frac{h(z_0)}{z_0}.
\end{equation*}
Since $h(x)$ is scattered, we have $y_0/z_0=y/z \in \Fq$.
\end{remark}

\begin{corollary}
Let $f(x)$ be  a scattered polynomial in $\cS_{n,q}$. 
Then $f(x)$ is $\GL$-equivalent to a polynomial $h(x)$ in standard form.
\end{corollary}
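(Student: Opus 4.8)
The plan is to use the field structure of $G_f^\circ$ together with the diagonalization of Theorem~\ref{t:subfield}, and then to recognize the resulting subspace as a graph via Proposition~\ref{p:eigenvectorsnotbelong} so that Theorem~\ref{t:sf} can be applied. Since $f(x)\in\cS_{n,q}$, the ring $G_f^\circ$ is not isomorphic to $\Fq$; by Remark~\ref{isofields} it is a matrix field of $\Fqn^{2\times2}$ isomorphic to $\F_{q^t}$ with $t\mid n$, and the hypothesis $G_f^\circ\not\cong\Fq$ forces $t>1$. Moreover $G_f^\circ$ contains the nonsingular scalar matrices $\diag(a,a)$, $a\in\Fq$, so the hypotheses of Theorem~\ref{t:subfield} are met and there exists $P\in\GL(2,q^n)$ with $PG_f^\circ P^{-1}$ in the diagonal shape \eqref{eq:subfield}.

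Next I would transform $U_f$ by $P^{-1}$. Setting $U'=U_fP^{-1}$ and using the right action $(x,y)\mapsto(x,y)A$, a direct computation gives that $A$ stabilizes $U'$ if and only if $PAP^{-1}\in G_f$, so the stabilizer of $U'$ is $G_{U'}=PG_fP^{-1}$. This group is diagonal by construction, and $|G_{U'}^\circ|=|G_f^\circ|=q^t$ with $t>1$. The one thing left to verify is that $U'$ is the graph of a $q$-polynomial, i.e.\ that $U'=U_h$ for some $h(x)\in\Fqn[x]$; this is the step I expect to be the main obstacle, and it is exactly what Proposition~\ref{p:eigenvectorsnotbelong} is designed to handle.

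To settle it, recall that the rows $v_1,v_2$ of $P$ are common eigenvectors of all matrices of $G_f$ (as noted after Theorem~\ref{t:subfield}), so $\la v_1\ra_{\Fqn}$ and $\la v_2\ra_{\Fqn}$ are fixed points of $G_f$ on $\PG(1,q^n)$. Since $f(x)$ is scattered and $G_f^\circ\not\cong\Fq$, Proposition~\ref{p:eigenvectorsnotbelong} yields $\la v_i\ra_{\Fqn}\notin L_f$ for $i=1,2$. Because $P=\left(\begin{smallmatrix}v_1\\ v_2\end{smallmatrix}\right)$ gives $v_iP^{-1}=e_i$, the right action by $P^{-1}$ sends these two points to $\la(1,0)\ra_{\Fqn}$ and $\la(0,1)\ra_{\Fqn}$, which therefore do not lie in $L_{U'}$. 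In particular $U'\cap\la(0,1)\ra_{\Fqn}=\{0\}$, so the projection $(x,y)\mapsto x$ restricted to the $n$-dimensional $\Fq$-space $U'$ is injective, hence bijective; thus $U'$ is the graph of an $\Fq$-linear map, that is $U'=U_h$ for a uniquely determined $q$-polynomial $h(x)$ of $q$-degree less than $n$.

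Finally I would assemble the conclusion. The subspace $U_h=U'=U_fP^{-1}$ is $\GL$-equivalent to $U_f$, so $h(x)$ is $\GL$-equivalent to $f(x)$ and, scatteredness being preserved by the definition \eqref{e:scattered} under the action of $\GL(2,q^n)$, the polynomial $h(x)$ is scattered. Its stabilizer is $G_h=PG_fP^{-1}$, which is diagonal with $|G_h^\circ|=q^t$ and $t>1$; hence condition $(i)$ of Theorem~\ref{t:sf} holds, and by that theorem $h(x)$ is in standard form. This exhibits the required $h(x)$ and completes the proof.
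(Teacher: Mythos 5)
Your proof is correct and follows essentially the same route as the paper: conjugate by the diagonalizing matrix $P$ from Theorem~\ref{t:subfield}, check that $U_fP^{-1}$ is the graph $U_h$ of a $q$-polynomial, and conclude via Theorem~\ref{t:sf}. The only (harmless) variation is that you rule out $\la(0,1)\ra_{\Fqn}\in L_{U'}$ by invoking Proposition~\ref{p:eigenvectorsnotbelong}, whereas the paper argues directly that $(0,y)\in U'$ with $y\neq0$ would force $(0,\alpha y)\in U'$ for all $\alpha\in\Fqt$, contradicting scatteredness; also note the direction of conjugation in your sentence ``$A$ stabilizes $U'$ iff $PAP^{-1}\in G_f$'' should read $P^{-1}AP\in G_f$ to match your (correct) conclusion $G_{U'}=PG_fP^{-1}$.
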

\begin{proof}
There is a nonsingular matrix $P$ such that $PG_f^\circ P^{-1}$ is equal to the right-hand side of
\eqref{eq:sfgf} for some integer $s$.
The map $\varphi: X \mapsto  XP^{-1}$ maps $U_f$ into an $n$-dimensional $\Fq$-subspace $U$.
The stabilizer of $U$ in $\GL(2,q^n)$ is the multiplicative group of the field \eqref{eq:sfgf}.
Therefore, if $(0,y)\in U$ for some $y\in\Fqn^*$, then $(0,\alpha y)\in U$ for all $\alpha\in\Fqt$, contradicting the scatteredness of $f(x)$.
As a consequence,
$U=U_h$, with $h(x)$ a  
scattered $\Fq$-linearized polynomial, and $G_h=PG_fP^{-1}$.
The assertion follows from Theorem~\ref{t:sf}.
\end{proof}
We will refer to the scattered polynomial $h(x)$ above as the \emph{standard form of $f(x)$}.
Only polynomials with a stabilizer not isomorphic to the multiplicative
group of $\Fq^*$ have a standard form.

The next result follows again from Theorem~\ref{t:subfield}, taking into account
that in the case of $G_h$ the automorphism $\sigma$ is not the identity in $\Fqt$,
and means that the standard form is \textit{essentially unique}.
\begin{proposition}
If $h(x)$ and $h'(x)$ are two scattered polynomials in standard form and are $\GL$-equivalent to a scattered 
polynomial $f(x)$, then there are $a,b\in\Fqn^*$ such that $h'(x)=ah(bx)$, or $h'(x)=ah^{-1}(bx)$.
\end{proposition}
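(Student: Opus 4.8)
The plan is to exploit that, by Theorem~\ref{t:sf}, the stabilizers $G_h$ and $G_{h'}$ are both groups of diagonal matrices in the standard coordinates with a \emph{nontrivial} diagonal automorphism, and that any $\GL$-equivalence between $U_h$ and $U_{h'}$ must conjugate one stabilizer onto the other and hence preserve their common eigenlines. First I would fix $\phi\in\GL(2,q^n)$ with $U_h^\phi=U_{h'}$; this exists since $h$ and $h'$ are both $\GL$-equivalent to $f$, hence to each other. A direct check using $U_h^\phi=U_{h'}$ shows $\phi G_{h'}\phi^{-1}=G_h$. Since $h$ and $h'$ are in standard form, Theorem~\ref{t:sf} gives
\[
G_h^\circ=\left\{\begin{pmatrix}\alpha&0\\0&\alpha^{q^s}\end{pmatrix}\colon\alpha\in\F_{q^{t_h}}\right\}\qquad\text{and}\qquad
G_{h'}^\circ=\left\{\begin{pmatrix}\beta&0\\0&\beta^{q^{s'}}\end{pmatrix}\colon\beta\in\F_{q^{t_{h'}}}\right\},
\]
with $(s,t_h)=(s',t_{h'})=1$ and $t_h,t_{h'}>1$; in particular the two diagonal automorphisms are not the identity. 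For $\alpha\in\F_{q^{t_h}}\setminus\Fq$ the matrix $\diag(\alpha,\alpha^{q^s})$ has the two distinct eigenvalues $\alpha\neq\alpha^{q^s}$, so its only eigenlines are $\la(1,0)\ra_{\Fqn}$ and $\la(0,1)\ra_{\Fqn}$; the same holds for $G_{h'}$, so both groups have exactly these two common eigenlines.

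Next I would transport eigenlines through the conjugation: if $vg'=\lambda v$ for $g'\in G_{h'}$, then $(v\phi^{-1})(\phi g'\phi^{-1})=\lambda(v\phi^{-1})$, so right multiplication by $\phi^{-1}$ carries the common eigenlines of $G_{h'}$ to those of $G_h$. As both sets equal $\{\la(1,0)\ra_{\Fqn},\la(0,1)\ra_{\Fqn}\}$, the matrix $\phi$ must permute the two coordinate lines, whence $\phi$ is either diagonal or anti-diagonal.

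If $\phi=\diag(a',d')$, then $(x,h(x))\phi=(a'x,d'h(x))\in U_{h'}$ forces $h'(a'x)=d'h(x)$, giving $h'(x)=a h(bx)$ with $a=d'$, $b=(a')^{-1}$. If instead $\phi=\begin{pmatrix}0&b'\\c'&0\end{pmatrix}$, then $(x,h(x))\phi=(c'h(x),b'x)\in U_{h'}$ forces $h'(c'h(x))=b'x$; here I would invoke that a scattered polynomial in standard form is bijective to write $x=h^{-1}\big((c')^{-1}x'\big)$ for $x'=c'h(x)$, obtaining $h'(x)=a h^{-1}(bx)$ with $a=b'$, $b=(c')^{-1}$. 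This yields the two alternatives of the statement.

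The two delicate points, on which I would focus the write-up, are establishing that the common eigenlines of each stabilizer are exactly the coordinate lines — which relies essentially on the diagonal automorphism being nontrivial, i.e.\ on $t_h,t_{h'}>1$ together with $(s,t_h)=(s',t_{h'})=1$ (were $\sigma$ trivial, every line would be an eigenline and $\phi$ would be unconstrained) — and the anti-diagonal case, where the inversion $h^{-1}$ appears and the bijectivity of standard-form scattered polynomials is indispensable.
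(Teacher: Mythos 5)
Your proof is correct and follows essentially the same route as the paper's: both arguments conjugate the diagonal stabilizers onto each other, observe that the nonscalar elements have exactly the two coordinate lines as eigenspaces (which forces the equivalence matrix to be diagonal or anti-diagonal), and then read off $h'(x)=ah(bx)$ or $h'(x)=ah^{-1}(bx)$ in the two cases. Your write-up merely makes explicit the eigenline-transport step and the appeal to bijectivity of standard-form polynomials, both of which the paper leaves implicit or places in the preceding theorem.
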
 

\begin{proof}
Since $h(x)$ and $h'(x)$ are $\GL$-equivalent, a relation $U_{h'}=U_hP$ with $P\in\GL(2,q^n)$ holds.
Since both $G_{h'}^\circ$ and $G_h^\circ=PG_{h'}^\circ P^{-1}$ consist solely of diagonal matrices,
having with the exception of the scalar matrices distinct eigenspaces of dimension one,
either $P=\diag(b^{-1},a)$, or
\[
P=\begin{pmatrix}0&a\\ b^{-1}&0\end{pmatrix}
\]
for some $a,b\in\Fqn^*$.
In the first case,  $U_{h'}=U_hP$ implies that for any $x\in\Fqn$ an $y\in\Fqn$ exists
such that $(x,h'(x))=(b^{-1}y,ah(y))$, implying $h'(x)=ah(bx)$.
In the latter case, for any $x\in\Fqn$ an $y\in\Fqn$ exists
such that $(x,h'(x))=(b^{-1}h(y),ay)$, implying $h'(x)=ah^{-1}(bx)$.
\end{proof}

%Recall that if $\sigma$ is an automorphism of $\Fqn$, the image of 
%$f(x)=\sum_{i=0}^{n-1}a_ix^{q^i}$ under $\sigma$ is defined to be
%$f^\sigma(x)=\sum_{i=0}^{n-1}a_i^\sigma x^{q^i}$.
Since any polynomial that is $\GaL$-equivalent to $f(x)$ is $\GL$-equivalent to $f^\sigma(x)$ for
some automorphism $\sigma$ of $\Fqn$, we have
\begin{proposition}
If $h(x)$ is a scattered polynomial in standard form $\GL$-equivalent to a scattered $q$-polynomial $f(x)$,
then $h^\sigma(x)$ is a scattered polynomial in standard form $\GL$-equivalent to $f^\sigma(x)$.
\end{proposition}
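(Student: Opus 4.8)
The plan is to observe that applying a field automorphism $\sigma\in\Aut(\Fqn)$ to the coefficients of a $q$-polynomial corresponds, at the level of subspaces, to applying $\sigma$ coordinatewise on $\Fqn^2$, and then to check that this coordinatewise map preserves every ingredient appearing in the statement: scatteredness, the standard-form invariant $t_h$, and $\GL$-equivalence.

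First I would record the basic computational identity. Since $\sigma$ is a power of the Frobenius $x\mapsto x^p$, it commutes with each map $x\mapsto x^{q^i}$; hence for any $q$-polynomial $g(x)=\sum_i a_ix^{q^i}$ and any $u\in\Fqn$ one has $g^\sigma(u^\sigma)=\sum_i a_i^\sigma(u^\sigma)^{q^i}=\big(\sum_i a_iu^{q^i}\big)^\sigma=g(u)^\sigma$. Letting $\tau\colon\Fqn^2\to\Fqn^2$ denote the coordinatewise action $(x,y)\mapsto(x^\sigma,y^\sigma)$, this identity gives $\tau(U_g)=U_{g^\sigma}$ for every $q$-polynomial $g$, where $U_g=\{(x,g(x))\colon x\in\Fqn\}$.

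Next I would verify that $h^\sigma$ is again in standard form. Scatteredness is preserved because $\tau$ is an $\F_p$-semilinear bijection that fixes the subfield $\Fq$ setwise (an automorphism of $\Fqn$ permutes its subfields, and $\Fq$ is the unique one of its order), so it carries the scattered $\Fq$-subspace $U_h$ to the $\Fq$-subspace $U_{h^\sigma}=\tau(U_h)$, which again satisfies \eqref{e:scattered} since $\tau$ preserves $\Fq$-dimension and sends $\la v\ra_{\Fqn}$ to $\la\tau(v)\ra_{\Fqn}$; alternatively one substitutes $y=u^\sigma$, $z=v^\sigma$ into $z\,h^\sigma(y)-y\,h^\sigma(z)=0$ and uses the identity above to reduce to the scatteredness of $h$. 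The invariant $t_h$ is preserved trivially: as $\sigma$ is a bijection, $b_i\neq0$ if and only if $b_i^\sigma\neq0$, so $h$ and $h^\sigma$ have the same support, whence $\Delta_{h^\sigma}=\Delta_h$ and $t_{h^\sigma}=t_h>1$.

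Finally I would transport the equivalence. By hypothesis there is $M\in\GL(2,q^n)$ with $U_h^M=U_f$. Applying $\tau$ and using that it intertwines the right matrix action with the entrywise action of $\sigma$, namely $((x,y)M)^\sigma=(x,y)^\sigma M^\sigma$ where $M^\sigma$ is $M$ with $\sigma$ applied to each entry, I obtain $U_{f^\sigma}=\tau(U_f)=\tau(U_h)\,M^\sigma=U_{h^\sigma}^{M^\sigma}$, with $M^\sigma\in\GL(2,q^n)$ because $\det(M^\sigma)=(\det M)^\sigma\neq0$. Hence $h^\sigma$ and $f^\sigma$ are $\GL$-equivalent, which finishes the argument. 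I do not expect any genuine obstacle here; the only points that require care are that $\sigma$ commutes with the maps $x\mapsto x^{q^i}$ and fixes $\Fq$ setwise, both of which follow at once from $\sigma$ being a power of the Frobenius.
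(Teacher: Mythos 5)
Your proof is correct and is exactly the verification the paper leaves implicit: the paper states this proposition without proof, treating it as an immediate consequence of the observation that the coordinatewise Frobenius action $\tau$ intertwines everything in sight. Your three checks (the identity $g^\sigma(u^\sigma)=g(u)^\sigma$ giving $\tau(U_g)=U_{g^\sigma}$, preservation of the support hence of $t_h$, and conjugation of the $\GL$-equivalence via $M^\sigma$) are all sound and fill in the details the authors omit.
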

So, two scattered polynomials in $\cS_{n,q}$  having $h(x)$ and $h'(x)$ as
standard forms are $\GaL$-equivalent if, and only if, there are $a,b\in\Fqn^*$ and an
automorphism $\sigma$ of $\Fqn$, such that $h'(x)=ah^\sigma(bx)$, or $h'(x)=a(h^{-1})^\sigma(bx)$.

\begin{example}
Assume $q\equiv1\pmod4$, $t\ge3$ odd, $n=2t$, $h\in\Fq$, $(s,t)=1$, $\psi(x)=\psi_{h,t,s}$. It holds
\begin{equation}\label{eq:bp}
  \psi(x)=x^u+x^{u^{t-1}}-x^{u^{t+1}}+x^{u^{2t-1}},\quad u=q^s.
\end{equation}
The remainder of this section is devoted to find a standard form for $\psi(x)$.
It can be directly checked that
\[
\psi^{-1}(x)=\frac14(x^u-x^{u^{t-1}}+x^{u^{t+1}}+x^{u^{2t-1}}).
\]
Since $q\equiv1\pmod4$, a $\rho\in\Fq$ exists such that $\rho^2=-1$.
Two eigenvectors of all nonscalar matrices in $G_\psi$ (cf. Proposition \ref{p:Gpsi}) are $(1,2\rho)$, $(1,-2\rho)$.
Then a standard form for $\psi(x)$ will be a $q$-polynomial $h(x)$ such that
\[
  U_\psi\begin{pmatrix}2\rho&2\rho\\ -1&1\end{pmatrix}=U_h.
\]
Since
\[
  (x,\psi(x)) \begin{pmatrix}2\rho&2\rho\\ -1&1\end{pmatrix}=(2\rho x-\psi(x),2\rho x+\psi(x)),
\]
one has to find the inverse of the map $2\rho x-\psi(x)$.

Let $h_1(x)=x+2\rho\psi^{-1}(x)$.
This map satisfies $h_1(2\rho x-\psi(x))=-2x^{u^{2t-1}}-2x^u$.

Next, define $h_2(x)=\frac14\sum_{i=1}^t(-1)^ix^{u^{2i-1}}$.
This map satisfies $h_2(-2x^{u^{2t-1}}-2x^u)=x$.
Therefore, $h_2\circ h_1$ is the inverse of $2\rho x-\psi(x)$.

A standard form for $\psi(x)$ is then
\begin{align*}
h(x)&=h_2\circ h_1(2\rho x+\psi(x))=\\
&=h_2\left(4\rho x+\psi(x)-4\psi^{-1}(x)\right)=
h_2\left(4\rho x+2x^{u^{t-1}}-2x^{u^{t+1}}\right)=\\
&=\frac12\sum_{i=1}^t(-1)^i\left(2\rho x+x^{u^{t-1}}-x^{u^{t+1}}\right)^{u^{2i-1}}.
\end{align*}
Since
\[
\sum_{i=1}^t(-1)^i\left(x^{u^{t-1}}-x^{u^{t+1}}\right)^{u^{2i-1}}=
2\sum_{i=1}^{t-1}(-1)^{i+1}x^{u^{t+2i}},
\]
one obtains
\[
h(x)=\rho\sum_{i=1}^t(-1)^ix^{u^{2i-1}}+\sum_{i=1}^{t-1}(-1)^{i+1}x^{u^{t+2i}}.
\]
The form of $h(x)$ is in agreement, by virtue of Theorem~\ref{t:sf},
with the fact that $G_\psi^\circ$ is isomorphic to $\F_{q^2}$.
\end{example}

\begin{example}
Using similar arguments as above one obtains the standard form $H(x)$ for $\psi(x)=\psi_{h,3,s}(x)$.
Let $\theta=h^{q^s}+h^{q^{2s}}$, and 
define \[M=\begin{pmatrix}\theta&\theta\\ 1&-1\end{pmatrix}={2\theta}P^{-1},\] 
where $P$ is the matrix in Example~\ref{ex:P}.
Let $H(x)$ satisfying $U_{\psi}M=U_H$, i.e.
\[ U_H=\{(\theta x+\psi(x),\theta x-\psi(x))\colon x\in\Fqn\}. \]
The map $\theta x+\psi(x)$ is injective and the inverse is up to a factor
\[
  \ell(x)=(-h^{q^s} + h^{1 + q^{2s}})x -h (h^{q^s} + h^{q^{2s}})x^{q^s} 
  -h^{1 + q^s + q^{2s}} (1 + h)x^{q^{3s}}+
  h^{1 + q^{2s}} (h^{q^s} + h^{q^{2s}})x^{q^{5s}}.
\]
The standard form is up to a factor $\ell(\theta x-\psi(x))$, that is
\begin{equation}\label{eq:Hdix}
cH(x)=(1-h^{1+q^{2s}})x^{q^s}+(h+h^2)x^{q^{3s}}+h^{1+q^{2s}}(h+h^{q^s})x^{q^{5s}}\quad (c\in\Fqn^*).
\end{equation}
Note that for $h \in \F_{q^2}$, the trinomial above is equal to that obtained in \cite[Section 3]{BZZ} multiplied by $(h+h^2)$.
\end{example}

\section{Homologies of related translation planes}\label{s:tp}

In order to make this paragraph self-contained, some concepts related to finite 
translation planes are recalled.
The reader is referred to  \cite{An54,Kn95,Lun} for a general treatment of the topic.

A \textit{(planar) spread} of a $(2n)$-dimensional $\Fq$-vector space $\mathbb V$ 
is a collection ${\mathcal F}$ of $q^n+1$ subspaces of $\mathbb V$
of dimension $n$ over $\Fq$, pairwise meeting trivially.
Clearly the union of all elements of ${\mathcal F}$ is $\Fqn^2$.
The geometry $\cA_{{\mathcal F}}$ whose \textit{points} are the elements of $\Fqn^2$ and whose 
\textit{lines} are the cosets of the 
subspaces in the collection ${\mathcal F}$ is an affine translation plane, 
and every finite translation plane
arises in this way.
The elements of ${\mathcal F}$ are called \textit{components} of $\cA_{{\mathcal F}}$ 
and can be regarded both as lines through
the origin, and as points at infinity.

From now on, take $\mathbb V=\F_{q^n}^2$ as $\Fq$-vector space for the construction above.
The affine plane $\cA_{{\mathcal D}}$ associated with the \textit{Desarguesian
spread} 
\[\mathcal{D}= \left \{ \langle v\rangle_{\Fqn} : v \in (\F^2_{q^n})^* \right \} \]
is isomorphic to the affine plane over $\Fqn$.

If a collineation $\kappa$ of a projective plane fixes a line $\ell$ pointwise, then
$\ell$ is an \textit{axis} of $\kappa$; if every line through a point $C$ is fixed setwise,
then $C$ is a \textit{center} of $\kappa$.
A collineation $\kappa$ has an axis if and only if it has a center; in this case, 
if $\kappa$ is not the identity, axis and center are unique. 
A nontrivial collineation having an axis is a \textit{central collineation}.
If the center of a central collineation $\kappa$ belongs to the axis, then
$\kappa$ is called an \textit{elation}; otherwise it is a \textit{homology}.

A collineation of an affine plane is called an \textit{affine central collineation} if its
extension to the projective plane is a central collineation, and the axis is a proper line.
Since the lines fixed by a central collineation are precisely the axis and the lines through
the center, the center of an affine central collineation $\kappa$ is a point at infinity (that is,
a component).
Any affine line through the center is called a \textit{coaxis} of $\kappa$.

The \textit{kernel} of $\cA_{{\mathcal F}}$ is
\[
  K(\cA_{{\mathcal F}})=\{\delta\in\End(\Fqn^2)\colon \delta(W)\subseteq W\mbox{ for any }W\in{\mathcal F}\},
\]
where $\End(\Fqn^2)$ denotes the endomorphism ring of the vector space $\Fqn^2$.
As is well known, $K(\cA_{{\mathcal F}})$ is a field, isomorphic to the kernel of any quasifield
co-ordinatizing $\cA_{{\mathcal F}}$, and trivially contains $\Fq$.
Any $\delta\in K(\cA_{{\mathcal F}})$, $\delta\neq0$, is a homology,
called a \textit{kernel homology} of $\cA_{{\mathcal F}}$.

If $\kappa$ is a collineation of an affine traslation plane $\cA_{{\mathcal F}}$, then there exists
a $K(\cA_{{\mathcal F}})$-semilinear automorphism $\lambda$ of $\Fqn^2$ and an $u\in\Fqn^2$, such that 
$\kappa(v)=\lambda(v)+u$ for all $v\in\Fqn^2$.

Let $\kappa$ be an affine central collineation of  $\cA_{{\mathcal F}}$.
Let $\lambda$ be the $K(\cA_{{\mathcal F}})$-semilinear automorphism related to $\kappa$, as above.
Then there is a component $W\in\mathcal F$ such that the restriction of $\lambda$ to $W$ is the
identity map. 
As a matter of fact, let $a\in\Fqn^2$ and $W\in{\mathcal F}$ such that the restriction of 
$\kappa$ to $a+W$ is the identity map.
By
\[\lambda(a)+\lambda(x)+u=a+x\ \mbox{for any }x\in W\quad\mbox{and }\lambda(0,0)=(0,0), \]
one deduces $\lambda(x)=x$ for any $x\in W$.

Next the procedure described in \cite{tp1} for obtaining a translation plane from a 
scattered $\Fq$-linearized polynomial $f(x)\in\Fqn[x]$ is reported.
In \cite{tp1}, most results are stated for $q>3$, which we will assume from now on.
The property of scatteredness implies that if $h,h'\in\Fqn$ are $\Fq$-linearly independent,
then $hU_f\cap h'U_f=\{(0,0)\}$.
Furthermore, the union of all $\Fq$-subspaces $hU_f$, $h\in\Fqn^*$, is equal to the union
of all subspaces in 
\[
L_f=\{\la(x,f(x))\ra_{\Fqn}\colon x\in \Fqn^*\}
\]
which is a subset of the Desarguesian spread ${\mathcal D}$.
Therefore, 
\[
  \cB_f=\left({\mathcal D}\setminus L_f\right)\cup\{ hU_f \colon h\in\Fqn^*\}
\]
is a spread of $\Fqn^2$, defining a translation plane $\cA_f=\cA_{\cB_f}$.
The kernel of such plane is isomorphic to $\Fq$ \cite{tp1}.
Recall that
\begin{theorem}\cite[Theorem 4.2]{tp1}
If $f(x)\in\Fqn[x]$ is a scattered polynomial, and $q>3$, then 
the $\Fq$-semilinear automorphism $\lambda$ related to any collineation of $\cA_f$
belongs to $\GaL(2,q^n)$, i.e., it is $\Fqn$-semilinear.
\end{theorem}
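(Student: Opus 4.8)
The plan is to show that the linear part $\lambda$ stabilizes the Desarguesian spread $\mathcal{D}$; once this is established, $\Fqn$-semilinearity follows formally. Indeed, the kernel $K(\cA_{\mathcal{D}})$ of the Desarguesian plane is the field of all $\delta\in\End(\Fqn^2)$ fixing every member of $\mathcal{D}$, namely the $\Fqn$-scalar multiplications $m_\alpha$. If $\lambda$ permutes $\mathcal{D}$, then for each such $\delta$ the conjugate $\lambda\delta\lambda^{-1}$ again fixes every member of $\mathcal{D}$, so conjugation by $\lambda$ normalizes $K(\cA_{\mathcal{D}})^*$; writing $\lambda m_\alpha\lambda^{-1}=m_{\sigma(\alpha)}$ for $\alpha\in\Fqn$ defines an automorphism $\sigma$ of $\Fqn$ with $\lambda(\alpha v)=\sigma(\alpha)\lambda(v)$, i.e.\ $\lambda\in\GaL(2,q^n)$. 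Thus everything reduces to proving $\lambda(\mathcal{D})=\mathcal{D}$.

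To this end, recall that $\lambda$ permutes the components of $\cB_f=(\mathcal{D}\setminus L_f)\cup\{hU_f\colon h\in\Fqn^*\}$, and that $\cB_f$ shares with $\mathcal{D}$ exactly the $\Fqn$-lines of $\mathcal{D}\setminus L_f$, whose number is $q^n+1-(q^n-1)/(q-1)$, while the scattered components $hU_f$ number $(q^n-1)/(q-1)$. Since $\lambda$ is $\Fq$-semilinear, $\mathcal{D}'=\lambda(\mathcal{D})$ is again a Desarguesian spread of $\Fqn^2$ over $\Fq$. The images $\lambda(\mathcal{D}\setminus L_f)$ all lie in $\cB_f$, so each is either an $\Fqn$-line of $\mathcal{D}\setminus L_f$ or a scattered component; as there are only $(q^n-1)/(q-1)$ scattered components, at most that many images fall outside $\mathcal{D}\setminus L_f$. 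Every image lying in $\mathcal{D}\setminus L_f$ belongs to $\mathcal{D}\cap\mathcal{D}'$, so $\mathcal{D}$ and $\mathcal{D}'$ have at least $q^n+1-2(q^n-1)/(q-1)$ common components.

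The main step, and the step I expect to be the obstacle, is the rigidity statement that two \emph{distinct} Desarguesian spreads of $\Fqn^2$ over $\Fq$ can share at most $(q^n-1)/(q-1)$ components (for $n=2$ this is the classical fact that two distinct regular spreads of $\PG(3,q)$ meet in at most a regulus). Granting it, it remains to observe that $q^n+1-2(q^n-1)/(q-1)>(q^n-1)/(q-1)$ is equivalent to $q^n+1>3(q^n-1)/(q-1)$, which holds precisely when $q>3$ and fails for $q=3$, $n\ge 2$, matching the hypothesis. Hence $\mathcal{D}$ and $\mathcal{D}'$ share strictly more components than two distinct Desarguesian spreads can, forcing $\mathcal{D}'=\mathcal{D}$ and completing the reduction. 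I expect the bulk of the work to be a clean proof of the shared-component bound, presumably by showing that the set of components common to two Desarguesian spreads is closed under the subline/regulus operations carried by each and therefore forms at most a single $\Fq$-subline regulus of size $(q^n-1)/(q-1)$; the remaining arithmetic and the kernel-normalizer step are then routine.
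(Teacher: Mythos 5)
A preliminary remark: the paper itself gives no proof of this statement — it is imported verbatim from \cite[Theorem 4.2]{tp1} — so there is no internal proof to compare against; I assess your argument on its own terms.

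Both of your reduction steps are sound. The kernel-normalizer step is correct: the kernel of the Desarguesian plane is exactly the set of scalar multiplications $m_\alpha$, $\alpha\in\Fqn$, so if $\lambda$ permutes $\mathcal{D}$ then $\sigma$ defined by $\lambda m_\alpha\lambda^{-1}=m_{\sigma(\alpha)}$ is a field automorphism and $\lambda\in\GaL(2,q^n)$. The counting step is also correct: $\lambda$ is $\Fq$-semilinear because the kernel of $\cA_f$ is $\Fq$, it permutes the $q^n+1$ components of $\cB_f$, only $(q^n-1)/(q-1)$ of which are non-Desarguesian, so at least $q^n+1-2(q^n-1)/(q-1)$ members of $\mathcal{D}$ are sent into $\mathcal{D}\cap\lambda(\mathcal{D})$, and this exceeds $(q^n-1)/(q-1)$ precisely when $q>3$. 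The one genuine gap is the rigidity lemma you explicitly ``grant'': that two distinct Desarguesian spreads of $V(\Fqn^2,\Fq)$ share at most $(q^n-1)/(q-1)$ components. You correctly isolate it as the crux, but the route you sketch (closure under subline/regulus operations) is not developed into an argument, so as written the proof rests on an unproved assertion.

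The good news is that the lemma is true, with a stronger bound, and admits a short direct proof that you should supply. Suppose $\mathcal{D}$ and $\mathcal{D}'=\lambda(\mathcal{D})$ share three components $W_1,W_2,W_3$; their respective kernels $F\cong\Fqn$ and $F'=\lambda F\lambda^{-1}\cong\Fqn$ both contain $\Fq\cdot\mathrm{id}$ and both leave $W_1,W_2,W_3$ invariant. Writing $V=W_1\oplus W_2$ and using $W_3$ to identify $W_2$ with $W_1\cong F$, one finds that $F'$ acts as $(x,y)\mapsto(b(x),b(y))$ for $b$ ranging over a field $B\subseteq\End_{\Fq}(F)$, and the components of $\mathcal{D}$ other than $W_2$ that also lie in $\mathcal{D}'$ are exactly the $\{(x,mx)\colon x\in F\}$ with $m_m$ centralizing $B$. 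These $m$ form a subfield of $F$ containing $\Fq$, and it is a \emph{proper} subfield unless $B=F$ (since the centralizer of $F$ in $\End_{\Fq}(F)$ is $F$ itself), i.e.\ unless $\mathcal{D}=\mathcal{D}'$. Hence two distinct Desarguesian spreads meet in at most $q^{n/2}+1$ components, forming a subline $\PG(1,q^d)$ with $d\mid n$, $d<n$ — which both confirms your subline intuition and is amply below your count $q^n+1-2(q^n-1)/(q-1)$ for $q>3$. With this paragraph inserted, your argument is complete.
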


Let $H_f=\{d\varphi \colon d \in \Fqn^*, \varphi \in G_f\}=\Fqn^*G_f.$
This $H_f$ is the group of all linear collineations of the 
translation plane $\cA_f$ by \cite[Corollary 4.3]{tp1}.
\begin{proposition}\label{p:qlin}
Let $f(x)\in\Fqn[x]$ be a scattered polynomial, and $q>3$, $n>2$.
Let $\kappa$ be an affine central collineation of $\cA_f$, and
let $\lambda$ be the semilinear automorphism related to $\kappa$.
Then the automorphism of $\Fqn$ associated with $\lambda$ is trivial. 
As a consequence, $\lambda\in H_f$.
\end{proposition}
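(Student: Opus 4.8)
The plan is to reduce everything to showing that the companion automorphism $\sigma\in\Aut(\Fqn)$ of $\lambda$ is the identity; the final clause then follows at once, since an $\Fqn$-linear $\lambda$ that preserves $\cB_f$ and fixes the origin is a linear collineation, hence lies in $H_f$ by \cite[Corollary 4.3]{tp1}. By \cite[Theorem 4.2]{tp1} we already know $\lambda\in\GaL(2,q^n)$, so $\lambda$ is $\sigma$-semilinear for some $\sigma$. Recall from the discussion preceding the statement that $\lambda$ fixes pointwise a component $W\in\cB_f$ (the one in the parallel class of the axis). Two possibilities occur: either $W\in\mathcal{D}\setminus L_f$, or $W=hU_f$ for some $h\in\Fqn^*$.

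The first case is immediate: if $W=\langle v\rangle_{\Fqn}$, then $\lambda(av)=av$ for every $a\in\Fqn$, while $\lambda(av)=a^\sigma\lambda(v)=a^\sigma v$; hence $a^\sigma=a$ for all $a$ and $\sigma=\mathrm{id}$. For the second case I would first conjugate by the scalar map $\rho\colon(x,y)\mapsto(hx,hy)$, which is $\Fqn$-linear and sends $U_f$ to $W$; then $\lambda''=\rho^{-1}\lambda\rho$ is again $\sigma$-semilinear and fixes $U_f$ pointwise. Writing $\lambda''(u,v)=(u^\sigma,v^\sigma)\begin{pmatrix}a&b\\c&d\end{pmatrix}$ and putting $z=x^\sigma$, the condition $\lambda''(x,f(x))=(x,f(x))$ becomes the pair of identities
\[
 a z+c\,f^\sigma(z)=z^{\sigma^{-1}},\qquad b z+d\,f^\sigma(z)=f\bigl(z^{\sigma^{-1}}\bigr),
\]
valid for all $z\in\Fqn$, where $f^\sigma$ denotes $f$ with $\sigma$ applied to its coefficients.

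The left-hand side of the first identity is a $q$-polynomial, i.e.\ a combination of the monomials $z^{q^k}$, whereas the right-hand side is a single $p$-power monomial; comparing the two as $p$-linearized polynomials modulo $z^{q^n}-z$ forces the exponent of $z^{\sigma^{-1}}$ to be a power of $q$. Thus $\sigma\in\Gal(\Fqn\,|\,\Fq)$ and $\sigma^{-1}\colon z\mapsto z^{q^i}$ for some $0\le i<n$. If $i=0$ then $\sigma=\mathrm{id}$ and we are done.

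The main obstacle is to rule out $i\ge1$. In that case, matching coefficients in the first identity kills all terms of $f$ except two, so that $f(x)=a_0x+a_ix^{q^i}$ with $c\,a_i^\sigma=1$ (in particular $a_i\neq0$); scatteredness forces $(i,n)=1$. Substituting into the second identity gives $b z+d\,f^\sigma(z)=a_0z^{q^i}+a_iz^{q^{2i}}$, whose left-hand side involves only the exponents $q^0$ and $q^i$. Since $n>2$ and $(i,n)=1$ imply $2i\not\equiv0,i\pmod n$, the nonzero term $a_iz^{q^{2i}}$ has no counterpart on the left, a contradiction. Hence $i=0$, $\sigma=\mathrm{id}$, and $\lambda\in\GL(2,q^n)$; as noted above, $\lambda\in H_f$.
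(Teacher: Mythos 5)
Your proof is correct, and its skeleton is the same as the paper's: locate a component $W\in\cB_f$ fixed pointwise by $\lambda$ and compare linearized-polynomial identities coefficient by coefficient; your one-line eigenvector argument for $W\in{\mathcal D}\setminus L_f$ merges the paper's cases 1) and 2). The genuine difference lies in the case $W=hU_f$. The paper splits it according to whether $L_f$ is of pseudoregulus type: in the non-pseudoregulus case it argues (rather tersely) from the first-coordinate identity alone that $c=0$ and the automorphism is trivial, while in the pseudoregulus case it first normalizes $hU_f$ to $\{(x,x^{q^s})\colon x\in\Fqn\}$ via the equivalence result of Csajb\'ok--Zanella and then uses both coordinates. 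You avoid both the split and the external normalization: your first-coordinate identity yields the dichotomy ``either $\sigma=\mathrm{id}$ or $f(x)=a_0x+a_ix^{q^i}$ with $(i,n)=1$'', and the second-coordinate identity kills the binomial case using only $n>2$ --- the same point at which $n>2$ enters the paper's case 4). Your route is self-contained and makes fully explicit the coefficient count that the paper only sketches; the paper's route has the merit of isolating the pseudoregulus case, which is exactly where the statement breaks down for $n=2$ (the Hall planes of the remark that follows the proposition).
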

\begin{proof}
Let $q=p^e$, $0\le k<ne$, and define $\tilde x=x^{p^k}$ for any $x\in\Fqn$.
A component of $\cA_f$, say $W$, exists which is pointwise fixed by 
\[
\lambda: (x, y)\ \mapsto\ (\tilde x, \tilde y)\begin{pmatrix}a&b\\ c&d\end{pmatrix},\quad ad-bc\neq0.
\]
Four cases are possible.\\
1) $W=\{(0, y)\colon y\in\Fqn\}$.
This implies $d\tilde y=y$ for any $y\in\Fqn$ hence $k=0$.\\
2) $W=\{(x, mx)\colon x\in\Fqn\}$.
Then $a\tilde x+c\tilde m\tilde x=x$ for any $x\in\Fqn$ leading to the thesis trivially once again.\\
3) $W=hU_f$, $h \in \Fqn^*$, and $L_f$ is not of pseudoregulus type.
Then $a\tilde h\tilde x+c\tilde h\widetilde{f(x)}=hx$  for any $x\in\Fqn$.
Let $f(x)=\sum_{i=0}^{n-1}a_ix^{q^i}$.
%The following is a polynomial identity $\ \mathrm{mod}\  x^{q^n}-x$:
It holds
\begin{equation}\label{eq:idpol}
  a\tilde h\tilde x+c\tilde h\sum_{i=0}^{n-1}\tilde{a_i}\tilde x^{q^i}-hx=0 \pmod{x^q-x}.
\end{equation}
Since $L_f$ is not of pseudoregulus type, $\sum_{i=0}^{n-1}\tilde{a_i}\tilde x^{q^i}$
has at least two monomials of distinct degrees not of type $\ell\tilde x\pmod{x^{q^n}-x}$ 
and at least one of them is not of type $\ell'x$ $\pmod {x^{q^n}-x}$.
Hence \eqref{eq:idpol} implies $c=0$ and $\tilde x=x$ for any $x\in\Fqn$.\\
4) $W=hU_f$, $h \in \Fqn^*$ and $L_f$ is of pseudoregulus type.
Then \cite{CsZa} there exists $\varphi\in\GL(2,q^n)$ and $s\in\{1,2,\ldots,n-1\}$, $(s,n)=1$,
such that $(hU_f)^\varphi=R$ where $R=\{(x,x^{q^s})\colon x\in\Fqn\}$.
Therefore $\lambda'=\varphi\circ\lambda\circ\varphi^{-1}$ fixes $R$ pointwise.
The automorphism of $\Fqn$ related to $\lambda'$ is again $x\mapsto\tilde x$.
So
\[
(\tilde x, \tilde x^{q^s})\begin{pmatrix}A&B\\ C&D\end{pmatrix}=(x, x^{q^s})\ \mbox{ for all }
x\in\Fqn,\ AD-BC\neq0.
\]
Assume $k\neq0$.
Then $A\tilde x+C\tilde x^{q^s}=x$ for any $x$ implies $A=0$, $C=1$ and $\tilde x=x^{q^{n-s}}$.
The equation $B\tilde x+D\tilde x^{q^s}=x^{q^s}$ implies then $\tilde x=x^{q^s}$;
so, $x^{q^{n-s}}=x^{q^s}$ for all $x\in\Fqn$, contradicting $(s,n)=1$.
\end{proof}
\begin{remark}
Proposition~\ref{p:qlin} cannot be extended to $n=2$, because $x^{q^{n-s}}=x^{q^s}$ does not contradict $(n,s)=1$. 
For $n=2$, every line of $\cA_f$ that is not also a line of the Desarguesian plane $\cA_{\mathcal D}$ is of type $a+hU_f$, $a\in\F_{q^2}^2$, $h\in \F_{q^2}^*$. This is a Baer subplane of $\cA_{\mathcal D}$ containing the $q+1$ points of $L_f$ at infinity. 
Therefore, $\cA_f$ is a well-known Hall plane \cite[Chapter X]{HuPi73}, and we will not deal with this case.
\end{remark}

Recall that for $f(x)\in\Fqn[x]$ a scattered polynomial, the
\textit{kernel homology group of the associated Desarguesian plane} is the set of all maps
$\lambda_a:(x,y)\mapsto(ax,ay)$, $a\in\Fqn^*$.
Despite the name,
if $a\notin\Fq$, then $\lambda_a$ is a collineation of $\cA_f$, and not a central collineation.
The maps of type $\lambda_a$ with $a\in\Fq^*$ are the kernel homologies of $\cA_f$.

Given two groups $G$ and $H$ of affine homologies and two lines
$\ell$, $m$, if $\ell$ is axis of any element in $G$ and coaxis of any element in $H$,
and furthermore $m$ is axis of any element in $H$ and coaxis of any element in $G$, then
$G$ and $H$ are called \emph{symmetric affine homology groups}.

The following theorem describes the structure of the central collineations of $\cA_f$
and generalizes the result found in~\cite{JhJo08} that deals with  $f(x)$ of Lunardon-Polverino type.
Recall that $\cS_{n,q}$ denotes the set of all scattered
$\Fq$-linearized polynomials in $f(x)\in\Fqn[x]$, such
that $|G_f|>q-1$.
\begin{theorem}\label{t:structure} 
Assume that $f(x)\in\Fqn[x]$ is a scattered polynomial and $q>3$,
$n>2$.
\begin{enumerate}[$(i)$]
\item If $f(x)\notin\cS_{n,q}$, then the plane $\cA_f$ admits no nontrivial affine central collineation
group; the full collineation group in $\GL(2, q^n )$ has order $(q^n- 1)$ and is
the kernel homology group of the associated Desarguesian plane.
\item If $f(x)\in\cS_{n,q}$, and $G_f$ is isomorphic to $\Fqt^*$, the plane $\cA_f$ admits 
{cyclic} symmetric  
affine homology groups of order $(q^t-1)/(q-1)$ but admits no nontrivial affine elation.
Two distinct components $X$ and $Y$ exist such that any 
affine homology has center either in $X$ or in $Y$.
Such components are elements of the Desarguesian spread $\mathcal{D}$ of $\Fqn^2$.
The full collineation group in $\GL(2, q^n )$ is the direct product
of the kernel homology group of the associated Desarguesian plane of order $(q^n-1)$ 
by a cyclic homology group of order $(q^t-1)/(q-1)$.
\end{enumerate} 
\end{theorem}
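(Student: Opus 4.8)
The plan is to push everything onto the $\Fqn$-linear part of a collineation. By Proposition~\ref{p:qlin} (where $q>3$, $n>2$ are used) every affine central collineation $\kappa$ of $\cA_f$ has $\Fqn$-linear part $\lambda$, and $\lambda\in H_f=\Fqn^*G_f$, the group of all linear collineations of $\cA_f$. For such a $\kappa$ the linear part $\lambda$ fixes some component pointwise; this component is the direction of the axis, while the center is the point at infinity fixed by $\lambda$, so both are determined by $\lambda$ alone. For $(i)$, $f(x)\notin\cS_{n,q}$ gives $G_f=\{aI:a\in\Fq^*\}$ and hence $H_f=\{dI:d\in\Fqn^*\}$; a scalar $dI$ fixes a nonzero vector only when $d=1$, so the only $\lambda\in H_f$ fixing a component pointwise is the identity, and then $\kappa$ is a translation (axis the line at infinity), not an affine central collineation. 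This proves the absence of nontrivial affine central collineations and identifies the full linear collineation group with $\Fqn^*I$ of order $q^n-1$, i.e.\ the kernel homology group of $\cA_{\mathcal{D}}$.

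For $(ii)$ I would first replace $f(x)$ by its standard form: since $f(x)\in\cS_{n,q}$ it admits a standard form $h(x)$ (the corollary to Theorem~\ref{t:sf}), and the associated $\phi\in\GL(2,q^n)$ induces an isomorphism $\cA_h\to\cA_f$ that fixes $\mathcal D$ setwise (as $\GL(2,q^n)$ stabilizes $\mathcal D$), maps $L_h$ to $L_f$, and conjugates $H_h$ onto $H_f$; hence all the assertions transfer and I may assume $G_f=\{\diag(\alpha,\alpha^{q^s}):\alpha\in\Fqt^*\}$ with $(s,t)=1$, by Theorem~\ref{t:sf}. The two common eigendirections $X=\la(1,0)\ra_{\Fqn}$ and $Y=\la(0,1)\ra_{\Fqn}$ lie in $\mathcal D$, and by Proposition~\ref{p:eigenvectorsnotbelong} neither is in $L_f$, so both are components of $\cA_f$. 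A generic linear collineation is $\lambda=\diag(d\alpha,d\alpha^{q^s})$, $d\in\Fqn^*$, $\alpha\in\Fqt^*$.

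The core is a two-case discussion of $\lambda$. If $\alpha\in\Fq$ then $\lambda$ is scalar and, as in part $(i)$, fixes a component pointwise only if it is the identity. If $\alpha\notin\Fq$ then $(s,t)=1$ gives $\alpha^{q^s}\neq\alpha$, so $\lambda$ has the two distinct eigendirections $X,Y$ and fixes a component pointwise exactly when one eigenvalue is $1$: either $d\alpha=1$ (axis $X$, center $Y$) or $d\alpha^{q^s}=1$ (axis $Y$, center $X$). Thus every nontrivial affine central collineation is a homology with center in $\{X,Y\}$, and, center and axis being distinct, there is no nontrivial affine elation. Imposing ``axis through the origin'' (i.e.\ $u=0$), the maps with axis $X$ are the $\diag(1,\alpha^{q^s-1})$, $\alpha\in\Fqt^*$; since $\gcd(q^t-1,q^s-1)=q^{(s,t)}-1=q-1$, the image of $\alpha\mapsto\alpha^{q^s-1}$ is cyclic of order $(q^t-1)/(q-1)$, and likewise for axis $Y$ with the $\diag(\alpha^{1-q^s},1)$. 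These two cyclic groups have the stated order and are symmetric, since the axis $X$ of the first is a coaxis of the second and the axis $Y$ of the second is a coaxis of the first.

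Finally I would record the decomposition $H_f=(\Fqn^*I)\times C$, where $C=\{\diag(1,\alpha^{q^s-1}):\alpha\in\Fqt^*\}$ is the cyclic homology group with axis $X$: every $\diag(d\alpha,d\alpha^{q^s})$ factors as $(d\alpha)I\cdot\diag(1,\alpha^{q^s-1})$, the intersection $\Fqn^*I\cap C$ is trivial, and both factors are normal because $H_f$ is abelian. With $|\Fqn^*I|=q^n-1$ and $|C|=(q^t-1)/(q-1)$ this is precisely the claimed product of the kernel homology group of $\cA_{\mathcal{D}}$ by a cyclic homology group. The step I expect to be most delicate is not the group bookkeeping but the careful handling of the translation part $u$ of a general affine central collineation: one must verify that the center and the axis direction depend only on $\lambda$, that the condition ``axis through the origin'' is exactly $u=0$, and thereby that elations are genuinely ruled out.
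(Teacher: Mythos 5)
Your proposal is correct and follows essentially the same route as the paper: Proposition~\ref{p:qlin} to force the linear part into $H_f$, conjugation to the diagonal form $\{d\,\diag(\alpha,\alpha^{q^s})\}$, Proposition~\ref{p:eigenvectorsnotbelong} to show the two eigendirections are components, an eigenvalue analysis to locate axes and centers, and the unique factorization $d\,\diag(\alpha,\alpha^{q^s})=(d\alpha)I\cdot\diag(1,\alpha^{q^s-1})$ for the direct-product structure. The only cosmetic differences are that you conjugate via the standard form rather than directly via Theorem~\ref{t:subfield}, and you make the order computation $\gcd(q^t-1,q^s-1)=q-1$ explicit where the paper simply exhibits a generator.
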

\begin{proof}
We first determine the full collineation group in $\GL(2, q^n )$, that is, $H_f$.
By Proposition~\ref{p:qlin}, all affine central collinations of $\cA_f$ fixing the origin are in $H_f$.
By Theorem \ref{t:subfield}, $H_f$ is conjugate, by a nonsingular matrix $P$, to a group
\[PH_fP^{-1}=\biggl \{ d\begin{pmatrix}
    \alpha & 0 \\
    0 & \alpha^{q^s}
\end{pmatrix} \colon d \in \Fqn^*,\ \alpha \in \Fqt^* \biggr \},\]
where $t$ divides $n$, and $ (s,t)=1$. 
If $f(x)\notin\cS_{n,q}$ such group is the set of all scalar matrices of the kernel homology group of the Desarguesian plane.

Next, assume $f(x)\in\cS_{n,q}$.
The map $\varphi:X\mapsto XP^{-1}$ is an isomorphism between $\cA_f$ and some translation
plane $\cA'=\cA_{\cB'}$.
The stabilizer of $\cB'$ in $\GL(2,q^n)$ is then $PH_fP^{-1}$.
The map $M\mapsto PMP^{-1}$ maps affine central collineations in $\cA_f$ with axis through the origin
into collineations in $\cA'$ of the same type, and conversely.
If $W$ is an eigenspace of a nonscalar
matrix in $PH_fP^{-1}$, then $W=X'=\langle(1,0)\rangle_{\Fqn}$ or $W=Y'=\langle(0,1)\rangle_{\Fqn}$.
Let $X=X'P$, $Y=Y'P$.
If a nonscalar $\lambda\in H_f$ fixes some points of $\cA_f$, then they are precisely
either those in $X$ or those in $Y$.
Assume $X$ is an eigenspace with eigenvalue one.
Since by Proposition \ref{p:eigenvectorsnotbelong} $X, Y \in\cB_f$, $\lambda$ is a homology of axis $X$ 
and center $Y$. 
The same argument holds exchanging $X$ and $Y$.

Consider the subgroup of $PH_fP^{-1}$
\begin{equation*}
K=\{ \diag(1, \alpha^{q^s-1}) \colon \alpha \in \F_{q^t}^*\}.
\end{equation*}
This is a cyclic homology group of order $(q^t-1)/(q-1)$ generated by an element of the type 
$\diag(1,\omega^{q^s-1})$ where $\omega$ is a primitive element of $\F_{q^t}^*$.  
Since any element in $PH_fP^{-1}$ can be written uniquely in the following way
\begin{equation}\label{eq:fcg}
\begin{pmatrix}
d & 0 \\
0 & d 
\end{pmatrix}
\begin{pmatrix}
1 & 0 \\
0 & \alpha^{q^s-1}
\end{pmatrix},
\end{equation}
the statements regarding structure and size of the collineation group in $\GL(2,q^n)$ follow.
\end{proof}

\begin{remark}
In the case of a linear set of pseudoregulus type, the points $X$ and $Y$ in
Theorem~\ref{t:structure} are known as \emph{transversal points} \cite{LMPT}.
It remains an open problem whether in general these transversal points depend only on the linear set 
$L_f$ or depend on the polynomial representing it.
\end{remark}

\begin{definition}
Let $G$ be a group acting on an abelian group $W$.
If no nontrivial subgroup of $W$ is invariant under the action of $G$, then
\emph{$G$ acts on $W$ irreducibly}.
\end{definition}

\begin{proposition}\cite{Lun}\label{Lun}
Let $\cA$ be a generalized Andr\'e translation plane.
Then there is a group $G$ of collineations of $\cA$, such that for any component $W$ of $\cA$,
with at most two exceptions, $G_{\{W\}}$ acts on $W$ irreducibly.
Furthermore, any element of $G$ is the product of two affine homologies.
\end{proposition}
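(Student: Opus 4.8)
The plan is to realize $\cA$ via the standard generalized André coordinatization and to take for $G$ the group generated by the two homology groups that such a plane always carries. First I would recall that a generalized André plane of order $q^n$ with kernel containing $\Fq$ is coordinatized on $F=\Fqn$ by a map $\lambda\colon\Fqn^*\to\mathbb Z/n\mathbb Z$ that factors through the norm $\N_{q^n/q}$; the spread then consists of $\ell_\infty=\{(0,y)\colon y\in F\}$ together with the components
\[
\ell_m=\{(x,x^{q^{\lambda(m)}}m)\colon x\in F\},\qquad m\in F,
\]
the key feature being that $\N_{q^n/q}(m')=\N_{q^n/q}(m)$ forces $\lambda(m')=\lambda(m)$.

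Writing $T=\{c\in\Fqn^*\colon \N_{q^n/q}(c)=1\}$, a cyclic group of order $(q^n-1)/(q-1)$, I would next verify, using the norm–invariance of $\lambda$, that for every $c\in T$ the maps $\eta_c\colon(x,y)\mapsto(cx,y)$ and $\theta_c\colon(x,y)\mapsto(x,cy)$ permute the $\ell_m$ and fix $\ell_\infty$, hence are collineations of $\cA$; here $\eta_c$ is an affine homology with axis $\ell_\infty$ and centre $\ell_0$, while $\theta_c$ is an affine homology with axis $\ell_0$ and centre $\ell_\infty$. Taking
\[
G=\{(x,y)\mapsto(cx,dy)\colon c,d\in T\},
\]
every element of $G$ is by construction of the form $\eta_c\theta_d$, i.e.\ a product of two affine homologies, which already yields the last assertion of the statement.

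For the irreducibility I would compute that $\eta_c\theta_d$ stabilizes $\ell_m$ precisely when $d=c^{q^{\lambda(m)}}$, so that $G_{\{\ell_m\}}=\{(x,y)\mapsto(cx,c^{q^{\lambda(m)}}y)\colon c\in T\}$; under the $\Fq$-linear isomorphism $x\mapsto(x,x^{q^{\lambda(m)}}m)$ of $F$ onto $\ell_m$ this group acts simply as multiplication by $T$ on $F\cong\Fqn$ (and the same holds, through a single coordinate, on $\ell_0$ and $\ell_\infty$, where $G_{\{W\}}=G$). Thus the whole statement reduces to the assertion that the only additive subgroups of $\Fqn$ invariant under multiplication by $T$ are $\{0\}$ and $\Fqn$. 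Such a subgroup is a vector space over the subfield $\F_p(T)$ generated by $T$, so it suffices to show $\F_p(T)=\Fqn$.

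This last point is where I expect the only real work to be. If $T$ lay in a proper subfield $\F_{p^j}\subsetneq\Fqn$, then $|T|=(q^n-1)/(q-1)$ would divide $p^j-1$; writing $q=p^e$, the exponent $j$ is a proper divisor of $en$, whence $j\le en/2\le e(n-1)$ and $p^j\le q^{n-1}<(q^n-1)/(q-1)=|T|$, so $|T|\nmid p^j-1$, a contradiction. Hence $\F_p(T)=\Fqn$, every $T$-invariant additive subgroup of a component is an $\Fqn$-subspace of that one-dimensional space, and $G_{\{W\}}$ acts irreducibly on every component $W$; in particular the two admissible exceptions $\ell_0,\ell_\infty$ are not even needed. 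The main obstacle is thus the arithmetic lemma $\F_p(T)=\Fqn$, settled by the size bound above; a secondary point requiring care is the initial reduction to the generalized André normal form and the verification, from the norm–invariance of $\lambda$, that the $\eta_c,\theta_c$ are genuine collineations.
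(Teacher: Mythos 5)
The paper does not actually prove this proposition; it is imported verbatim from L\"uneburg's book, so there is no internal argument to compare yours with. Judged on its own, your proof has a genuine gap right at the start: the coordinatization you use is that of an \emph{Andr\'e} plane, not of a \emph{generalized} Andr\'e plane. In a generalized Andr\'e system the multiplication is $x\circ m=x^{q^{\lambda(m)}}m$ with $\lambda\colon\Fqn^*\to\mathbb Z/n\mathbb Z$ subject only to the spread condition, which says that whenever $\lambda(a)\neq\lambda(b)$ one has $\N_{q^n/q^{d}}(a)\neq \N_{q^n/q^{d}}(b)$ for $d=(\lambda(a)-\lambda(b),n)$. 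Only when $d=1$ does this force $\lambda$ to separate full $\N_{q^n/q}$-classes; for $d>1$ it is perfectly possible that $\N_{q^n/q}(a)=\N_{q^n/q}(b)$ while $\lambda(a)\neq\lambda(b)$, and Foulser's $\lambda$-systems give many such planes that are not Andr\'e planes. The requirement that $\lambda$ factor through $\N_{q^n/q}$ is precisely the definition of an Andr\'e plane, a strictly smaller class. Consequently, for $c$ in your group $T=\ker \N_{q^n/q}$ the maps $\eta_c$ and $\theta_c$ need not permute the components $\ell_m$ (one needs $\lambda(cm)=\lambda(m)$, which can fail), so your $G$ is not in general a group of collineations and the construction collapses outside the Andr\'e case.

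Everything after that point is sound \emph{for Andr\'e planes}: the identification of $\eta_c,\theta_c$ as affine homologies with axes $\ell_\infty,\ell_0$, the computation that $G_{\{\ell_m\}}=\{(x,y)\mapsto(cx,c^{q^{\lambda(m)}}y)\colon c\in T\}$ acts as multiplication by $T$ on $\ell_m\cong\Fqn$, and the arithmetic lemma that $T$ generates $\Fqn$ over $\F_p$ are all correct. To repair the argument for genuinely generalized Andr\'e planes you would have to replace $T$ by the (possibly much smaller) subgroup of those $c$ for which $\eta_c$ and $\theta_c$ really are collineations, and then show that the stabilizer of a generic component still acts irreducibly; that is exactly where the substance of L\"uneburg's treatment lies and where the ``at most two exceptions'' genuinely enters. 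As written, your argument proves the proposition only for Andr\'e planes.
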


\begin{corollary}\label{c:necessandr}
Let $f(x)\in\Fqn[x]$ be a scattered polynomial, and assume that $\cA_f$ is a generalized Andr\'e plane. 
Then for any component $W$ of $\cA_f$, with at most two exceptions, $(H_f)_{\{W\}}$ acts on $W$ irreducibly.
\end{corollary}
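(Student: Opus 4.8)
The plan is to derive the statement from Proposition~\ref{Lun} by showing that the group $G$ it supplies is contained in $H_f$, and then to invoke the elementary principle that irreducibility of a group action is inherited by overgroups. Since $\cA_f$ is a generalized André plane, Proposition~\ref{Lun} provides a group $G$ of collineations of $\cA_f$ such that $G_{\{W\}}$ acts irreducibly on $W$ for every component $W$ with at most two exceptions, and such that every element of $G$ is a product of two affine homologies. I would fix this $G$, together with its (at most two) exceptional components, once and for all.

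The key step is to prove $G\le H_f$. As every element of $G$ is a product of two affine homologies, it suffices to show that each such affine homology lies in $H_f$. An affine homology is an affine central collineation, so by Proposition~\ref{p:qlin} (using the standing hypotheses $q>3$, $n>2$) its associated automorphism of $\Fqn$ is trivial and its linear part belongs to $H_f$. It then remains to check that the homology coincides with its linear part, i.e.\ that it fixes the origin: by Theorem~\ref{t:structure} every affine homology of $\cA_f$ has its axis among the two components $X$ and $Y$, which are elements of the Desarguesian spread $\mathcal{D}$ and hence pass through the origin. A collineation fixing its axis pointwise therefore fixes the origin and has trivial translation part. Consequently each affine homology is a linear collineation lying in $H_f$, every element of $G$ is a product of two elements of $H_f$, and thus $G\le H_f$.

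With $G\le H_f$ established, for every component $W$ one has $G_{\{W\}}\le (H_f)_{\{W\}}$, and both groups act on $W$ by the restriction of the same collineation action. Hence any subgroup $S\le W$ invariant under $(H_f)_{\{W\}}$ is a fortiori invariant under $G_{\{W\}}$; so whenever $G_{\{W\}}$ admits no nontrivial invariant subgroup of $W$, the same holds for $(H_f)_{\{W\}}$. Therefore $(H_f)_{\{W\}}$ acts irreducibly on $W$ for every component $W$ outside the at most two exceptions attached to $G$, which is exactly the assertion.

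The step I expect to be the main obstacle is the verification that $G\le H_f$, rather than merely that the linear parts of its elements lie in $H_f$. A priori an affine homology need not fix the origin, and if $G$ carried nontrivial translation parts the inheritance of irreducibility would break down, since an $(H_f)_{\{W\}}$-invariant subgroup need not be invariant under an affine action (one would additionally need the translation to lie in the subgroup). This is precisely what Theorem~\ref{t:structure} resolves, by forcing the axes of all affine homologies of $\cA_f$ to be the origin-containing components $X$ and $Y$; alternatively, one may simply take the group $G$ of Proposition~\ref{Lun} inside the translation complement of $\cA_f$, as the generalized André construction permits.
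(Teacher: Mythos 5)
Your overall strategy --- show $G\le H_f$ and then observe that irreducibility is inherited by overgroups --- is exactly how the paper intends this corollary to follow from Proposition~\ref{Lun} (no separate proof is written out there), and the second half of your argument is fine. The problem is the specific justification you give for $G\le H_f$. Theorem~\ref{t:structure} does \emph{not} say that every affine homology of $\cA_f$ has axis equal to $X$ or $Y$; it says that every affine homology has \emph{center} equal to the point at infinity of $X$ or of $Y$. The axis of such a homology is then a coset $a+X$ or $a+Y$, and cosets missing the origin genuinely occur: if $\lambda\in H_f$ fixes $X$ pointwise and $a\in\Fqn^2$, then $v\mapsto\lambda(v)+a-\lambda(a)$ is a collineation of $\cA_f$ (a translation composed with $\lambda$) fixing $a+X$ pointwise, hence an affine homology whose axis avoids the origin whenever $a\notin X$. (The abstract's phrase ``those with axis through the origin form two groups'' already signals that other affine homologies exist.) Consequently an affine homology need not coincide with its linear part, a product of two of them can carry a nontrivial translation, and this route does not give $G\le H_f$; as you yourself note, without linearity the transfer of irreducibility to $(H_f)_{\{W\}}$ breaks down.

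The correct repair is the one you mention only as an aside: the group $G$ of Proposition~\ref{Lun} is to be taken inside the linear translation complement --- indeed the very statement that $G_{\{W\}}$ acts irreducibly on the subgroup $(W,+)$ presupposes an action by group automorphisms, i.e.\ by linear maps, and this is how L\"uneburg's theorem is formulated. Once $G$ consists of linear collineations, $G\le H_f$ is immediate from the fact, quoted from \cite[Corollary 4.3]{tp1} just before Proposition~\ref{p:qlin}, that $H_f$ is the group of \emph{all} linear collineations of $\cA_f$; you then need neither Proposition~\ref{p:qlin} nor Theorem~\ref{t:structure}. With that substitution the rest of your argument (irreducibility of $G_{\{W\}}$ forcing irreducibility of the overgroup $(H_f)_{\{W\}}$, with the same at most two exceptional components) is correct and complete.
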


As it has been proved in \cite{tp1, LP2001}, if $L_f$ is a linear set of pseudoregulus type, then $\mathcal{A}_f$ is an Andrè plane. On the other hand, it holds

\begin{theorem}\label{t:notgap}
Assume that $f(x)\in\Fqn[x]$, $q>3$, is a scattered polynomial, 
and that $L_f$ is not of pseudoregulus type.  
Then $\cA_f$ is not a generalized Andr\'e plane.
\end{theorem}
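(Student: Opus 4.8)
The plan is to argue by contradiction using Corollary~\ref{c:necessandr}: if $\cA_f$ were a generalized André plane, then $(H_f)_{\{W\}}$ would act irreducibly on $W$ for every component $W$ with at most two exceptions, and I would exhibit strictly more than two components on which the induced action is reducible. The natural candidates are the components $eU_f$, $e\in\Fqn^*$, of which there are exactly $(q^n-1)/(q-1)$ distinct ones (two values of $e$ give the same component precisely when their ratio lies in $\Fq^*$, since $f$ is scattered); because $q>3$ this number is at least $q+1>2$.

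First I would pin down the stabilizer $(H_f)_{\{eU_f\}}$. Since $\lambda_e=\diag(e,e)$ is a scalar, hence central, element of $\GL(2,q^n)$ and $eU_f=\lambda_e U_f$, one has $\mathrm{Stab}_{\GL(2,q^n)}(eU_f)=\lambda_e G_f\lambda_e^{-1}=G_f$. As $G_f\subseteq H_f$, this yields $(H_f)_{\{eU_f\}}=H_f\cap G_f=G_f$ for every $e$.

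Next I would show that $G_f$ acts reducibly on each $eU_f$. As $\lambda_e\colon U_f\to eU_f$ is a $G_f$-equivariant isomorphism (scalars commute with every matrix), it suffices to treat $U_f$. Here the hypothesis that $L_f$ is not of pseudoregulus type enters through Proposition~\ref{p:leftid}: $G_f^\circ$ is a field isomorphic to a proper subfield $\F_{q^t}$ of $\Fqn$, so that $t\mid n$ and $t<n$, whence $n/t\ge2$. The field $G_f^\circ$ acts $\Fq$-linearly on $U_f$, with the zero matrix acting as $0$ and the identity as the identity, so $U_f$ becomes a vector space over $G_f^\circ\cong\F_{q^t}$ of dimension $n/t\ge2$. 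Any one-dimensional $G_f^\circ$-subspace is then a proper nonzero $\Fq$-subspace invariant under $G_f$, so the action of $G_f=(H_f)_{\{eU_f\}}$ on $eU_f$ is reducible.

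Putting these together, each of the $(q^n-1)/(q-1)\ge q+1>2$ components $eU_f$ gives a reducible action, contradicting Corollary~\ref{c:necessandr}; hence $\cA_f$ is not a generalized André plane. The only delicate points I anticipate are the bookkeeping needed to confirm that the $eU_f$ are genuinely distinct components of $\cB_f$ and that $(H_f)_{\{eU_f\}}$ equals $G_f$ exactly (rather than something larger), together with the clean identification of $U_f$ as an $\F_{q^t}$-space via the field $G_f^\circ$. This last identification, which converts irreducibility into a statement about $\F_{q^t}$-dimension, is the conceptual heart of the argument and is precisely where non-pseudoregularity is used.
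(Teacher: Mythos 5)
Your proposal is correct and follows essentially the same route as the paper: both argue via Corollary~\ref{c:necessandr}, identify the stabilizer in $H_f$ of a component of type $hU_f$ with (a conjugate of) $G_f$, and then use Proposition~\ref{p:leftid} to produce a proper nontrivial $G_f$-invariant subgroup of that component coming from the fact that $G_f^\circ$ is a \emph{proper} subfield $\F_{q^t}$ of $\Fqn$. The only difference is cosmetic: you work unconjugated and exhibit the invariant subgroup intrinsically as a one-dimensional $G_f^\circ$-subspace of the $\F_{q^t}$-vector space $U_f$, whereas the paper conjugates by $P$ to diagonalize $G_f$ and writes the same subgroup as $\{(x,g(x))\colon x\in\Fqt\}$.
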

\begin{proof}
Assume that $\cA_f$ is a generalized Andr\'e plane.
Define $\varphi:X\mapsto XP^{-1}$ as in Theorem~\ref{t:structure}. 
So, by Corollary~\ref{c:necessandr}, for any $W$ in $\cB'$, with at most two exceptions,
$(PH_fP^{-1})_{\{W\}}$ acts irreducibly on $W$.
Since $(q^n-1)/(q-1)>2$, it may be assumed that $(PH_fP^{-1})_{\{W\}}$ acts irreducibly on 
a component of type
\[ W=(hU_f)^\varphi,\quad h\in\Fqn^*. \]

Next, note that
\[ PH_fP^{-1}=\{dPMP^{-1}\colon M\in G_f,\ d\in\Fqn^*\}. \]
Assume $dPMP^{-1}\in(PH_fP^{-1})_{\{W\}}$ for $M\in G_f$, $d\in\Fqn^*$.
For any $x\in\Fqn$ there exists $y\in\Fqn$ such that
\[ dh(x, f(x))MP^{-1}=h(y,f(y))P^{-1}. \]
This implies $d(x,f(x))\in U_f$ and $d\in\Fq$ since $f(x)$ is scattered.
Since $dI_2\in G_f$ for any $d\in\Fq^*$, one deduces that any element of
$(PH_fP^{-1})_{\{W\}}$ is of type $PMP^{-1}$ with $M\in G_f$.
%By Theorem~\ref{t:subfield}, any element of $(PH_fP^{-1})_{\{W\}}$ is a diagonal matrix with entries in %$\Fqt$.
The component $W$ of $\cB'$ is an $n$-dimensional scattered $\Fq$-subspace of $\Fqn^2$.
By Proposition~\ref{p:eigenvectorsnotbelong},
$W\cap(\{0\}\times\Fqn)=\{(0,0)\}$.
Therefore, there is an $\Fq$-linear map $g:\Fqn\rightarrow\Fqn$ such that
\[ W=\{(x,g(x))\colon x\in\Fqn\}. \]
By Proposition \ref{p:leftid}, any matrix in $PG_fP^{-1}$ has 
coefficients in some  $\F_{q^t}$ with $t<n$.
Then, since $\{(x,g(x))\colon x\in\Fqt\}$ is a subgroup of $(W,+)$ invariant under the action of
$(PH_fP^{-1})_{\{W\}}$, $(PH_fP^{-1})_{\{W\}}$ does not act irreducibly on $W$, a contradiction.
\end{proof}

{Theorem \ref{t:notgap} has been proved in \cite{JhJo08} for a polynomial
of Lunardon-Polverino type.}

\noindent
Giovanni Longobardi\\
Dipartimento di Matematica e Applicazioni
 “Renato Caccioppoli”\\
Università degli Studi di Napoli Federico II\\
Via Cintia, Monte S. Angelo
I\\
80126 Napoli - Italy\\
\emph{giovanni.longobardi@unina.it}

\medskip

\noindent Corrado Zanella\\
Dipartimento di Tecnica e Gestione dei Sistemi Industriali\\
Universit\`a degli Studi di Padova\\
Stradella S. Nicola, 3\\
36100 Vicenza VI - Italy\\
\emph{corrado.zanella@unipd.it}

\end{document}